\documentclass{amsart}
\usepackage{amsfonts,amssymb,amsthm}
\usepackage{enumitem,mathdots,mathtools,xcolor}

\theoremstyle{plain}

\newtheorem{lemma}{Lemma}
\newtheorem{proposition}{Proposition}
\newtheorem{theorem}{Theorem}
\newcommand{\rank}{\operatorname{rank}}
\newtheorem*{theorem*}{Theorem}

\theoremstyle{definition}
\newtheorem{definition}{Definition}
\newtheorem{remark}{Remark}

\DeclareMathOperator{\Null}{null}

\title[Canonical forms for boundary conditions]{Canonical forms for boundary conditions of self-adjoint odd-order differential operators}
\author{Yorick Hardy}
\author{Bertin Zinsou}
\address{
 School of Mathematics,
 University of the Witwatersrand,
 Johannesburg 2050, South Africa
}
\address{
 National Institute for Theoretical and Computational Sciences (NITheCS),
 South Africa
}
\email[Y.~Hardy]{yorick.hardy@wits.ac.za}
\email[B.~Zinsou]{bertin.zinsou@wits.ac.za}

\allowdisplaybreaks

\begin{document}

\begin{abstract}
 It is useful to have canonical forms of boundary
 conditions in the study of the eigenvalues of boundary
 value problems and associated numerical applications.
 In [J. Appl. Anal. Comput., 2024, 14(4), {1854--1868}],
 a canonical form is given for self-adjoint
 differential operators of even order, with eigenvalue
 parameter dependent boundary conditions. In this
 article, we derive canonical forms for the remaining
 case, namely: for self-adjoint $(2n+1)$-th order
 differential operators with eigenvalue parameter
 dependent boundary conditions.
\end{abstract}

\keywords{
 Canonical forms,
 Boundary conditions,
 Self-adjoint operators,
 CS-decomposition,
 Hermitian.
}

\subjclass[2020]{34B08, 34B09, 15A21, 15B57.}

\maketitle

\section{Introduction}
Canonical forms of boundary conditions are important in the study of the eigenvalues of boundary conditions \cite[Section 1.3]{zettl21}, their numerical computations \cite{bail}, characterizations of self-adjoint extensions of systems \cite{zemanek16} and the study of transmission conditions \cite{zinsou20}. These canonical forms are of importance in the study of dependence of eigenvalues with respect to the parameters in the differential equation and the boundary conditions \cite{li17,li23,lin23,zheng23}. In particular, this dependence arises in $n$-th order boundary value transmission problems \cite{li17,lin23,hinton78}. Previous work in \cite{hao12,bao,niu20} provided canonical forms for boundary conditions of operators for order up to 4. In \cite{hardy24}, a general canonical form was given for boundary conditions of operators of even order; but a (general) odd-order canonical form was not studied. This paper provides a canonical form for boundary conditions for the remaining odd-order cases which have not been studied in previous work.

In this paper, we extend the study conducted in \cite{hardy24} to $(2n+1)$-th order differential operators using similar methods. We start our investigation with fifth order differential operators with self-adjoint boundary conditions that we extend to $(2n+1)$-th differential operators with self-adjoint boundary conditions. This article also provides a new proof for \cite[Theorem 3]{zhang23}.

In Section \ref{fifth}, we introduce the self-adjoint fifth order differential operators with eigenvalue dependent boundary conditions under consideration. In Section \ref{types}, we present a brief discussion on the types of boundary conditions for the self-adjoint fifth order differential operators. Next we classify the different types of canonical forms for self-adjoint fifth order differential operators in Section \ref{cano} using a new canonical form that we extend to all self-adjoint odd-order differential operators in Section \ref{cano2n+1}.

%%%%%%%%%%%%%%%%%%%%%%%%%%%%%%%%%%%%%%%%%%%%%%%%%%%%%%%%%%%%%%%%%%%%%%%%%%%%%%%%%%%%%%%%%%%%%%%%%%%%%%%%%%%%%%%%%%%%%%%%%%%%%%%%%%%%%%

%%%%%%%%%%%%%%%%%%%%%%%%%%%%%%%%%%%%%%%%%%%%%%%%%%%%%%%%%%%%%%%%%%%%%%%%%%%%%%%%%%%%%%%%%%%%%%%%%%%%%%%%%%%%%%%%%%%%%%%%%%%%%%%%%

\section{Self-adjoint fifth order boundary value problems} \label{fifth}

We consider on the interval $J=(a,b)$, $-\infty\le a<b\le \infty$, the fifth order differential equation 
with formally self-adjoint differential expression (with smooth coefficients) \cite{hinton78,lin23}
%\begin{multline}\label{fiftheq1}My=i(q_2y)'-i(q_1y')''+i(q_0y'')'''\\ +i(q_2y')-i(q_1y'')'+i(q_0y''')''\\ +i(p_2y)-i(p_1y')'+i(p_0y'')''=\lambda wy,\end{multline}
\begin{equation}\label{fiftheq1}My=\sum_{k=0}^2 (-1)^k\left(i\left[\left(q_{2-k}y^{(k)}\right)^{(k+1)}+\left(q_{2-k}y^{(k+1)}\right)^{(k)}\right]+\left(p_{2-k}y^{(k)}\right)^{(k)}\right)=\lambda wy,\end{equation}
where $q_0^{-1}$ exists on $J$, $p_j,q_j\in C^j(J)$ are sufficiently smooth real-valued functions on $J$ and
$w\in L(J,\mathbb R)$ is a real-valued Lebesgue integrable function on $J$, $w>0$ a.e. on $J$.
Let
\begin{equation*}
 \theta:=\dfrac1{\sqrt2}+\dfrac{i}{\sqrt2}.
\end{equation*}
If the coefficients in \eqref{fiftheq1} are not smooth, we introduce the quasi-derivatives
of $y$ given by
\begin{align*}
 &y^{[1]} = y',\\
 &y^{[2]} = -\theta q_0(y^{[1]})', \\
 &y^{[3]} = -\theta q_0(y^{[2]})'+i\theta{p_0}{q_0}^{-1}y^{[2]}-iq_1y^{[1]}, \\
 &y^{[4]} = -(y^{[3]})'-\theta{q_1}{q_0}^{-1}y^{[2]}+p_1y^{[1]}-iq_2y, \\
 &y^{[5]} = -(y^{[4]})'-iq_2y'+p_2y.
\end{align*}
and \eqref{fiftheq1} is replaced by the equation $y^{[5]}=\lambda wy$
where $q_0^{-1}, p_0, p_1, p_2, q_1, q_2, w\in L(J,\mathbb R)$, $p_2>0$, $w>0$ a.e. on $J$
\cite{hinton78}.
Let $Y=\left(y, y', y'', y^{[3]}, y^{[4]}\right)^{\top}$. We now consider the fifth order boundary value problem defined by \eqref{fiftheq1} and the boundary conditions
\begin{align}\label{fiftheq2}AY(a)+BY(b)=0, \quad A, B\in M_5(\mathbb C).\end{align}
For the boundary conditions \eqref{fiftheq2} with the assumptions made so far, \cite[Theorem 2.4]{MolZet2} leads to  
\begin{proposition}\label{fifthprop1}Let $C_5$ be the symplectic matrix of order 5 defined by \begin{align}\label{fiftheq4} C_5=\left((-1)^r\delta_{r,6-s}\right)_{r,s=1}^5,\end{align} where  $\delta$ is the Kronecker delta. Then   problems \eqref{fiftheq1}--\eqref{fiftheq2} are self-adjoint if and only if 
\begin{align}\label{fiftheq3}\rank(A:B)=5 \quad \textrm{and}\quad AC_5A^*=BC_5B^*. \end{align}\end{proposition}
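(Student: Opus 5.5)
The plan is to reduce the statement to the general characterization of self-adjoint boundary conditions for quasi-differential expressions, \cite[Theorem 2.4]{MolZet2}, via the Lagrange (Green) identity. The first step is to compute the Lagrange form of the expression $M$ written through the quasi-derivatives $y^{[1]},\dots,y^{[5]}$. For $y,z$ in the maximal domain we aim to establish
\begin{equation*}
\int_a^b \left(z^{[5]}\,\overline{y}-z\,\overline{y^{[5]}}\right)w^{-1}\cdots = [z,y](b)-[z,y](a),\qquad [z,y]=\gamma\,Y_y^{*}\,C_5\,Y_z,
\end{equation*}
where $\gamma$ is a nonzero scalar constant, $Y$ is the vector of quasi-derivatives, and $C_5=\left((-1)^r\delta_{r,6-s}\right)$. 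In practice we will verify this form of $[\cdot,\cdot]$ by integration by parts, using the defining recursions. The factors $\theta$ and $i$ in the recursions are chosen precisely so that $C_5$ appears up to a scalar multiple. Note that $C_5$ is antidiagonal with entries $-1,1,-1,1,-1$, so $C_5^*=C_5^{\top}=-C_5$. Hence $iC_5$ is Hermitian, which is the odd-order analogue of the symplectic structure.

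The second step is to apply the abstract result. By \cite[Theorem 2.4]{MolZet2}, with a matrix $E$ such that $[z,y]=Y_y^*EY_z$, the operator defined by $AY(a)+BY(b)=0$ is self-adjoint if and only if $\rank(A:B)=5$ and $AE^{-1}A^*=BE^{-1}B^*$. In our case $E=\gamma C_5$ with $C_5^{-1}=C_5^{\top}=-C_5$. The scalar $\gamma$ and the sign cancel from both sides, so the condition becomes $AC_5A^*=BC_5B^*$, which is \eqref{fiftheq3}.

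If a self-contained argument is preferred over citing the theorem, we would proceed directly. Self-adjointness of the realization means that the domain is a Lagrangian subspace of $\mathbb C^{10}$ for the Hermitian form $\mathcal{E}=\operatorname{diag}(-E,E)$ on $(Y(a),Y(b))$. The boundary subspace is $\ker(A:B)$, which has dimension $5$ exactly when the rank condition holds. This subspace is maximal isotropic if and only if its annihilator, the range of $(A:B)^*$, equals $\mathcal{E}$ times itself. This in turn is equivalent to $(A:B)\,\mathcal{E}^{-1}(A:B)^*=0$, that is, $-AE^{-1}A^*+BE^{-1}B^*=0$.

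The main obstacle is the first step: checking carefully that the quasi-derivative definitions yield exactly the antidiagonal $C_5$, with the alternating signs and with the scalars $\theta$, $\bar\theta$, $i$ combining into a single constant. We would carry this out by computing $\overline{z}\,y^{[5]}-y\,\overline{z^{[5]}}$ term by term. Using $\theta\bar\theta=1$ and $\theta^2=i$, we would telescope the expression into the derivative of $\sum_r(-1)^r\overline{z^{[r-1]}}\,y^{[5-r]}$ times a constant.
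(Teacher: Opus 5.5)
Your route is the paper's. The paper's entire argument is the appeal to \cite[Theorem 2.4]{MolZet2}, which is your second step. Your Lagrangian-subspace paragraph is the standard proof of that theorem for regular endpoints; it tacitly uses that $y\mapsto(Y(a),Y(b))$ maps the maximal domain onto $\mathbb{C}^{10}$. The problems are in the verification you add.

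First, $C_5$ is not skew. Its nonzero entries are $(C_5)_{r,6-r}=(-1)^r$, and $(-1)^{6-r}=(-1)^r$, so $C_5^{\top}=C_5=C_5^{-1}$. Thus $C_5$ itself is Hermitian; this is exactly what the paper uses when it diagonalizes $C_5\oplus(-C_5)$ with eigenvalues $\pm1$. Consequently $iC_5$ is skew-Hermitian. Since any Lagrange bracket satisfies $[y,z]=-\overline{[z,y]}$, your $\gamma$ must be purely imaginary, and $\mathcal{E}$ is skew-Hermitian, not Hermitian. This slip is harmless for the final cancellation of $\gamma$.

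Second, and this is a genuine gap, the step you call the main obstacle does not come out as you predict with the recursions as printed. Write them as $Y'=QY+(0,0,0,0,-y^{[5]})^{\top}$ with $Y=(y,y',y^{[2]},y^{[3]},y^{[4]})^{\top}$. Look for an antidiagonal $E$, with $\epsilon_r:=E_{r,6-r}$, such that $Q^*E+EQ=0$.
\begin{itemize}
\item The plain steps $y'=y^{[1]}$ and $(y^{[3]})'=-y^{[4]}+\cdots$ force $\epsilon_1=\epsilon_2$ and $\epsilon_4=\epsilon_5$.
\item The $\theta$-steps $(y^{[1]})'=-\bar\theta q_0^{-1}y^{[2]}$ and $(y^{[2]})'=-\bar\theta q_0^{-1}y^{[3]}+\cdots$ force $\epsilon_r=-\bar\theta^{2}\epsilon_{r+1}=i\epsilon_{r+1}$ for $r=2,3$.
\end{itemize}
Hence $(\epsilon_1,\dots,\epsilon_5)\propto(-1,-1,i,1,1)$, while $C_5$ has $(-1,1,-1,1,-1)$. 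For instance, when $p_0=0$ and $p_j=q_j=0$ for $j\ge1$, a direct check shows that $\bar z y^{[5]}-y\overline{z^{[5]}}$ is the derivative of $-\bar z y^{[4]}-\bar z'y^{[3]}+i\overline{z^{[2]}}y^{[2]}+\overline{z^{[3]}}y'+\overline{z^{[4]}}y$.

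No scalar $\gamma$ repairs this: the $(2,4)$ and $(4,2)$ entries of $E$ have opposite signs, while those of $C_5$ are equal. Moreover, the paper's $Y$ carries $y''$ rather than $y^{[2]}=-\theta q_0y''$ in the third slot. So the middle entry becomes $iq_0^2$, evaluated separately at $a$ and at $b$. Carried out literally, your argument yields $AE_a^{-1}A^*=BE_b^{-1}B^*$, where $E_x$ is the bracket matrix for the paper's $Y$ at $x$, and not $AC_5A^*=BC_5B^*$. The identities $\theta\bar\theta=1$ and $\theta^2=i$ cannot produce the alternating sign pattern.

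To close the gap you have two options.
\begin{itemize}
\item Normalize the boundary vector: impose the conditions on $\tilde Y=\operatorname{diag}(1,1,1,-i,i)\,(y,y',y^{[2]},y^{[3]},y^{[4]})^{\top}$. Then $[y,z]=-i\,\tilde Z^*C_5\tilde Y$, and your step 2 gives exactly $AC_5A^*=BC_5B^*$.
\item State explicitly the hypothesis the paper makes implicitly by citing \cite{MolZet2}: that the quasi-derivatives are in M\"oller--Zettl's Lagrange-symmetric form with respect to $C_5$.
\end{itemize}
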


%%%%%%%%%%%%%%%%%%%%%%%%%%%%%%%%%%%%%%%%%%%%%%%%%%%%%%%%%%%%%%%%%%%%%%%%%%%%%%%%%%%%%%%%%%%%%%%%%%%%%%%%%%%%%%%%%%%%%%%%%%%%%%%%%%%%%%

%%%%%%%%%%%%%%%%%%%%%%%%%%%%%%%%%%%%%%%%%%%%%%%%%%%%%%%%%%%%%%%%%%%%%%%%%%%%%%%%%%%%%%%%%%%%%%%%%%%%%%%%%%%%%%%%%%%%%%%%%%%%%%%%%

\section{Types of boundary conditions of fifth order differential operators}\label{types}

The following theorem (which follows from \cite[Theorem 3]{zhang23}) gives conditions satisfied by the matrices $A$, $B$ for the problems \eqref{fiftheq1}--\eqref{fiftheq2} to be self-adjoint.
\begin{theorem}\label{typesthm1}
 Assume that the matrices $A, B\in M_5(\mathbb C)$ satisfy \eqref{fiftheq3}. Then

{\rm (i)} $3\le \rank A\le 5$, $3\le \rank B \le 5$;

{\rm (ii)} let $0\le r \le 2$;  if $\rank A=3+r$, then $\rank B =3+r$.
\end{theorem}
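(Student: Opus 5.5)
We will recast the conditions \eqref{fiftheq3} as a statement about an isotropic subspace for an indefinite Hermitian form, and then use elementary facts about isotropic subspaces. First, from \eqref{fiftheq4}, $C_5$ is the real anti-diagonal matrix with entries $-1,1,-1,1,-1$; it is symmetric, hence Hermitian, and invertible. It splits into the $2\times 2$ blocks on coordinates $\{1,5\}$ and $\{2,4\}$, each of the form $\pm\left(\begin{smallmatrix}0&1\\1&0\end{smallmatrix}\right)$ with eigenvalues $\pm1$, together with the entry $-1$ at position $(3,3)$. Hence $C_5$ has inertia $(2,3)$: two positive and three negative eigenvalues.

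Put $M=(A:B)\in M_{5\times 10}(\mathbb C)$ and $H=\operatorname{diag}(C_5,-C_5)$, which is Hermitian, nondegenerate, of signature $(5,5)$. The condition $AC_5A^*=BC_5B^*$ is exactly $MHM^*=0$. Let $W=\operatorname{range}(M^*)\subset\mathbb C^{10}$. Since $\rank M=5$, $W$ has dimension $5$, and the previous identity says $W$ is totally isotropic for $H$. Since $\dim W=5$ equals half of $10$ and $H$ is nondegenerate, we get $W=W^{\perp_H}$.

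For part (i), consider $K_A=W\cap(\{0\}\times\mathbb C^5)$. A vector $M^*x$ lies in $K_A$ if and only if $A^*x=0$, and $M^*$ is injective. Hence $\dim K_A=\dim\Null A^*=5-\rank A$. If $(0,u)\in K_A$, isotropy gives $-u^*C_5u=0$ and the same holds for the sesquilinear form on pairs, so the second components of $K_A$ form a totally $C_5$-isotropic subspace of $\mathbb C^5$. By Sylvester's law of inertia, a totally isotropic subspace for a nondegenerate form of inertia $(2,3)$ has dimension at most $\min(2,3)=2$. Indeed, it must meet the $3$-dimensional negative definite subspace trivially. Therefore $5-\rank A\le 2$, i.e.\ $\rank A\ge 3$. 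The same argument applied to $K_B=W\cap(\mathbb C^5\times\{0\})$ gives $\rank B\ge3$.

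For part (ii), it suffices to show $\rank B\le\rank A$; by symmetry the reverse inequality also holds. Let $P_1W\subset\mathbb C^5$ be the projection of $W$ onto the first factor. This is the range of $A^*$, so it has dimension $\rank A$. Take $u\in(P_1W)^{\perp_{C_5}}$. Then for every $(v,w)\in W$ we have $(u,0)^*H(v,w)=u^*C_5v=0$, so $(u,0)\in W^{\perp_H}=W$, i.e.\ $(u,0)\in K_B$. Because $C_5$ is nondegenerate, $\dim (P_1W)^{\perp_{C_5}}=5-\rank A$. This gives $5-\rank A\le\dim K_B=5-\rank B$, hence $\rank B\le\rank A$. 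Writing $\rank A=3+r$ with $0\le r\le2$, which is allowed by (i), we conclude $\rank B=3+r$.

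The only delicate point is the maximality $W=W^{\perp_H}$, which is where both $\rank(A:B)=5$ and the nondegeneracy of $C_5$ are used. The rest is routine dimension counting; the inertia computation for $C_5$ must be checked carefully, since it is what produces the bound $3$ in (i).
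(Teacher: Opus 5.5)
Your proof is correct, and it takes a genuinely different route from the paper's.

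The paper obtains Theorem~\ref{typesthm1} as a by-product of Lemma~\ref{lem:fifth}. It diagonalizes $C_5\oplus(-C_5)$ with an explicit unitary $V$ and uses row operations to bring $(A:B)$ to the form $(V_{11}^*+WV_{12}^*:V_{21}^*+WV_{22}^*)$, with $W$ unitary. From this it reads off $\rank A=3+\rank W_3$ and $\rank B=2+{}$(rank of a $3\times 3$ block of $W$). A CS-decomposition of $W$ then rewrites these as $3+\rank(I_2-KK^*)$ and $2+\rank(I_3-K^*K)$. Equality of the ranks follows because $KK^*$ and $K^*K$ have the same number of unit eigenvalues, and the bound $3$ comes from $W_3$ being $2\times 2$.

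You replace all of this with the geometry of your Lagrangian subspace $\operatorname{range}(M^*)$ for the split form $C_5\oplus(-C_5)$. The lower bound is just the Witt index $\min(2,3)=2$ of $C_5$. The rank equality comes from maximality, $\operatorname{range}(M^*)=\operatorname{range}(M^*)^{\perp_H}$, which embeds the $C_5$-orthogonal complement of $\operatorname{range}(A^*)$ into $\operatorname{range}(M^*)\cap(\mathbb{C}^5\times\{0\})\cong\ker B^*$.

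What your approach buys:
\begin{itemize}
\item It is shorter and coordinate-free, and needs neither explicit eigenvectors nor the CS-decomposition.
\item It carries over verbatim to order $2n+1$. There $C_{2n+1}$ has inertia $(n+1,n)$ or $(n,n+1)$, so $\rank A=\rank B\ge n+1$.
\item This immediately rules out fully separated conditions, since those would force $\rank A+\rank B=2n+1$.
\end{itemize}

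What it does not produce is what the paper is actually after. That is the explicit parametrization in part (b) of Lemma~\ref{lem:fifth}, and the expression of the common rank as $5-\Null(I_2-KK^*)$ in terms of the CS-parameters. The mixed/coupled classification in part (d) rests on that expression.
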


We provide a simple, albeit technical, alternative proof of this theorem in Section \ref{sec:5thorder}.
Note that the boundary conditions \eqref{fiftheq2} are invariant under left multiplication by a non singular matrix $G\in M_5(\mathbb C)$ and if $ AC_5A^*=BC_5B^*$, then 
\begin{align*} (GA)C_5(GA)^*=(GB)C_5(GB)^*.\end{align*}
Therefore, the boundary condition form \eqref{fiftheq3} is invariant under elementary matrix row transformations of $(A:B)$.

Next, we define the different types of boundary conditions based on Theorem \ref{typesthm1}.

\begin{definition}\label{typesdef1}%
 Let the hypotheses and notation of Theorem \ref{typesthm1} hold.
 Then the boundary conditions \eqref{fiftheq2}, \eqref{fiftheq3} are 

\noindent
(1) mixed if $r=0$ or $r=1$,\\
(2) coupled if $r=2$.
 \end{definition}
\begin{remark}\label{typermk2} Note that the boundary conditions \eqref{fiftheq2} are coupled if each of the five boundary conditions involves both endpoints, while they are mixed if there is at least one separated and one coupled boundary condition. By \cite[Theorem 2]{zhang23}, fully separated boundary conditions are not possible in the 5th order case (we will also prove this independently in Theorems \ref{thm:2n+1th-odd} and \ref{thm:2n+1th-even} below).  \end{remark}

%%%%%%%%%%%%%%%%%%%%%%%%%%%%%%%%%%%%%%%%%%%%%%%%%%%%%%%%%%%%%%%%%%%%%%%%%%%%%%%%%%%%%%%%%%%%%%%%%%%%%%%%%%%%%%%%%%%%%%%%%%%%%%%%%%%%%%%
%
%%%%%%%%%%%%%%%%%%%%%%%%%%%%%%%%%%%%%%%%%%%%%%%%%%%%%%%%%%%%%%%%%%%%%%%%%%%%%%%%%%%%%%%%%%%%%%%%%%%%%%%%%%%%%%%%%%%%%%%%%%%%%%%%%%

%%%%%%%%%%%%%%%%%%%%%%%%%%%%%%%%%%%%%%%%%%%%%%%%%%%%%%%%%%%%%%%%%%%%%%%%%%%%%%%%%%%%%%%%%%%%%%%%%%%%%%%%%%%%%%%%%%%%%%%%%%%%%%%%%%%%%%

%%%%%%%%%%%%%%%%%%%%%%%%%%%%%%%%%%%%%%%%%%%%%%%%%%%%%%%%%%%%%%%%%%%%%%%%%%%%%%%%%%%%%%%%%%%%%%%%%%%%%%%%%%%%%%%%%%%%%%%%%%%%%%%%%

\section{Canonical forms for fifth order differential operators}
\label{cano}
\label{sec:5thorder}

Here we follow the same approach as presented in \cite{hardy24}, adjusted
for the odd-order case.
\begin{lemma}
 \label{lem:fifth}%
 Let $A$ and $B$ be $5\times 5$ matrices satisfying 
 \begin{equation*}
  \rank(A:B)=5 \quad \textrm{and}\quad AC_5A^*=BC_5B^*.
 \end{equation*}
 Then, there exist $2\times 2$ unitary matrices $U_1$ and $V_1$,
 $3\times 3$ unitary matrices $U_2$ and $V_2$
 and real diagonal $2\times 2$ matrices $C$ and $S$
 such that
 \begin{enumerate}[label=(\alph*)]
  \item
   $C^2+S^2=I_2$,
  \item
   $(A:B)$ has the canonical form
   \begin{equation*}
   (A:B) = \dfrac{1}{\sqrt{2}}Q_1
           \begin{pmatrix}
            C & 0 & I_2 & 0 & \mathbf{S} \\
            -\mathbf{S}^* & I_3 & 0 & I_3 & \mathbf{C}
           \end{pmatrix}
           Q_2,
   \end{equation*}
  \item $\rank(A) = 5-\Null(I_2-KK^*) = \rank(B)$,
  \item the boundary conditions are
   \begin{enumerate}[label=(\roman*)]
    \item mixed, if and only if $\rank(I_2-KK^*)<2$,
    \item coupled, if and only if $\rank(I_2-KK^*)=2$,
   \end{enumerate}
 \end{enumerate}
 \medskip
 where
 $\mathbf{S}=\begin{pmatrix} 0 & S \end{pmatrix}$,
 $\mathbf{C}=\begin{pmatrix}1&0\\0& C\end{pmatrix}$,
 $K=\begin{pmatrix}0&1&0\\0&0&1\end{pmatrix}U_2\mathbf{C},$
 \begin{gather*}
  Q_1 = \begin{pmatrix}
         U_1 & 0 \\
         0 & U_2
       \end{pmatrix}, \quad
  Q_2 = \begin{pmatrix}
         V_1 & 0 & 0 & 0 & 0 \\
           0 & U_2^* & 0 & 0 & 0 \\
           0 & 0 & U_1^* & 0 & 0 \\
           0 & 0 & 0 & U_2^* & 0  \\
         0 & 0 & 0 & 0 & V_2
        \end{pmatrix}
        Q_3, \\
  Q_3 = \begin{pmatrix}
         I_2 & 0 & 0 & 0 & 0 \\
           0 & \begin{bmatrix}1\\0\\0\end{bmatrix} & 0 & 0 & 0 \\
           0 & 0 & I_2 & 0 & 0 \\
           0 & 0 & 0 & \begin{bmatrix}0\\ I_2 \end{bmatrix} & 0  \\
           0 & 0 & 0 & 0 & I_3
        \end{pmatrix}Q_4,\quad
  Q_4 = \begin{pmatrix}
         I_2 &      0 &  C_2 & 0 & 0 & 0 \\
           0 & \sqrt2 &    0 & 0 & 0 & 0 \\
         I_2 &      0 & -C_2 & 0 & 0 & 0 \\
         0 & 0 & 0 & I_2 &      0 &  C_2 \\
         0 & 0 & 0 &   0 & \sqrt2 &    0 \\
         0 & 0 & 0 & I_2 &      0 & -C_2
        \end{pmatrix}.
 \end{gather*}
\end{lemma}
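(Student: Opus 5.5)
We will reduce the problem to a statement about a $5$-dimensional subspace that is maximal isotropic for an indefinite Hermitian form, and then read off the canonical form from the CS-decomposition. The matrix $C_5$ is real with $C_5^\top=-C_5$, so $iC_5$ is Hermitian. Its eigenvalues are $+1$ (twice), $-1$ (twice) and one further value coming from the middle index: $C_5$ has middle entry $(-1)^3\delta_{3,3}=-1$, so $iC_5$ has the eigenvalue $-i$ there. We will not diagonalise $iC_5$ directly. Instead we pair the coordinates $(1,5)$ and $(2,4)$ through the $2\times2$ blocks $C_2$, and leave coordinate $3$ alone; this is exactly what $Q_4$ encodes, with $\sqrt2$ in the middle. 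Writing $W=(A:B)Q_4^{-1}$ (suitably normalised), the condition $AC_5A^*=BC_5B^*$ becomes $W\,\mathrm{diag}(I_2,D,-I_2,\ldots)\,W^*=\ldots$, i.e.\ $W J W^*=0$ for a fixed signature matrix $J$. We first check this change of variables concretely. The $\pm1$ eigen-directions at $a$ and at $b$ combine so that $J$ has five $+1$'s and five $-1$'s: the middle coordinates $y_3(a)$ and $y_3(b)$ carry opposite signs, because $C_5$ appears with opposite sign on the two sides. This is the odd-order feature that $Q_3$ accounts for by inserting the column $(1,0,0)^\top$ and $(0,I_2)^\top$.

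Once we have $WJW^*=0$ with $J=\mathrm{diag}(I_5,-I_5)$ after permutation (and $\rank W=5$), we write $W=(W_+:W_-)$. Then $W_+W_+^*=W_-W_-^*$, and both are invertible: otherwise some nonzero $x$ gives $x^*W_+=0=x^*W_-$, contradicting full rank. Left-multiplying by $(W_+W_+^*)^{-1/2}$, which is an allowed row operation, makes both halves coisometries, hence unitary $5\times5$ matrices. So row-equivalently $W=(I_5:\mathcal U)$ with $\mathcal U$ unitary.

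We then split the $+$ side as $2+3$ (the $C_2$-paired coordinates versus the rest) and the $-$ side likewise. We apply the CS-decomposition to $\mathcal U$ with blocks $2\times2$, $2\times3$, $3\times2$, $3\times3$. This gives $\mathcal U=\mathrm{diag}(U_1,U_2)\begin{pmatrix}C&\mathbf S\\-\mathbf S^*&\mathbf C\end{pmatrix}\mathrm{diag}(V_1,V_2)$ with $C^2+S^2=I_2$, padded to $\mathbf S=(0\ S)$ and $\mathbf C=\mathrm{diag}(1,C)$. Absorbing $U_1,U_2$ into $Q_1$ and compensating on the identity part by $U_1^*,U_2^*$ yields the displayed form (b), with the $1/\sqrt2$ coming from $Q_4$.

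For (c) we undo $Q_4,Q_3$ to isolate the columns acting on $Y(a)$. The rank of $A$ equals the rank of the first block-column $\begin{pmatrix}C&\ast\\0&\ast\end{pmatrix}$ combined with the $C_2$ contributions. A Schur complement argument shows that this rank is $3+\rank$ of the $2\times2$ compression, and that compression is $I_2-KK^*$ with $K$ the indicated $2\times3$ block of $U_2\mathbf C$. The same computation on the $B$ side gives the same value, which also reproves Theorem \ref{typesthm1}, since $0\le\Null(I_2-KK^*)\le2$. Part (d) is then immediate from Definition \ref{typesdef1}.

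The main obstacle will be (c): tracking exactly which columns of $Q_2$ land in $A$ versus $B$, and verifying that the deficiency equals $\Null(I_2-KK^*)$. We will do this by computing $AA^*$ explicitly from the canonical form; the unitary factors drop out, leaving $\tfrac12$ times a matrix whose lower-right $2\times2$ Schur complement is $I_2-KK^*$.
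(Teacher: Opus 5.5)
Your architecture is the paper's. You pass to a Hermitian form of signature $(5,5)$, row-reduce $(A:B)$ to the graph of a $5\times5$ unitary, take the CS decomposition with $2+3$ blocks, and absorb $U_1,U_2,V_1,V_2$ into $Q_1,Q_2$. Your normalisation by $(W_+W_+^*)^{-1/2}$ is a tidier substitute for the paper's SVD/square-root step.

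The opening claim, however, is false. Since $r+s=6$ forces $r\equiv s \pmod 2$, $C_5$ is real \emph{symmetric} with $C_5^2=I_5$. So $C_5$ itself is Hermitian with eigenvalues $1$ (twice) and $-1$ (three times). By contrast, $iC_5$ is not Hermitian, and no Hermitian matrix has eigenvalue $-i$. Your final $J$ is right, but the justification should be this: $P=\frac1{\sqrt2}\begin{pmatrix}I_2&0&C_2\\0&\sqrt2&0\\I_2&0&-C_2\end{pmatrix}$ is real orthogonal with $PC_5P^{\top}=\mathrm{diag}(I_2,-1,-I_2)$. Writing $(A:B)=\frac1{\sqrt2}(\tilde A:\tilde B)Q_4$ then turns the condition into $\tilde AD\tilde A^*=\tilde BD\tilde B^*$ with $D=\mathrm{diag}(I_2,-1,-I_2)$, which is exactly the paper's choice of $V$.

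Your description of the split ("$C_2$-paired coordinates versus the rest") is inconsistent, since there are four paired coordinates on each side. To land on the displayed $Q_2,Q_3$ the split must be as follows. On the $(-1)$-side, take the paired coordinates at $a$ versus the middle coordinate at $a$ together with the paired ones at $b$. On the $(+1)$-side, take the paired ones at $a$ versus the middle and paired ones at $b$. Because this is not the $a/b$ split, $\rank A$ is governed by $W_3=\begin{pmatrix}0&1&0\\0&0&1\end{pmatrix}U_2(-\mathbf{S}^*)V_1$ rather than by $S$ alone. That is where $K$ comes from.

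The genuine gap is in (c). Your Schur complement claim is not correct as stated. After stripping $Q_1$, the upper-left $3\times3$ block of $Q_1^*AA^*Q_1$ is $\mathrm{diag}(I_2+C^2,|(U_2)_{11}|^2)$, which can be singular. In the original coordinates, the Schur complement of the upper-left $3\times3$ block of $AA^*$ is $(I_2+KK^*)^{-1}(I_2-KK^*)$, not $I_2-KK^*$. It has the same rank, so this part is repairable.

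The paper's route is simpler. The factorisation $A=\frac1{\sqrt2}\begin{pmatrix}W_1&0&I_2\\ \mathbf{w}_2^T&1&0\\ W_3&0&0\end{pmatrix}\begin{pmatrix}I_2&0&C_2\\0&\sqrt2&0\\I_2&0&-C_2\end{pmatrix}$ gives $\rank A=3+\rank W_3$ by column operations. Then $W_3W_3^*=I_2-KK^*$ follows from $\mathbf{S}^*\mathbf{S}=I_3-\mathbf{C}^2$.

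More importantly, ``the same computation on the $B$ side gives the same value'' is not true as written. It gives $\rank B=2+\rank(I_3-K^*K)$, with a $3\times3$ matrix. To conclude $\rank A=\rank B$ you need $\Null(I_3-K^*K)=\Null(I_2-KK^*)$. This holds because $KK^*$ and $K^*K$ have the same nonzero eigenvalues with the same multiplicities, in particular the eigenvalue $1$. That identity is the entire content of $\rank A=\rank B$, i.e.\ of Theorem \ref{typesthm1}(ii), and your proposal never supplies it.
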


\begin{proof}
Equation \eqref{fiftheq3} can be written in the form
\begin{equation}
 \label{fiftheqdsum}
 \rank(A:B) = 5, \qquad
 (A:B)\begin{pmatrix}C_5 & 0 \\ 0 & -C_5\end{pmatrix}(A:B)^* = 0,
\end{equation}
where $C_5\oplus(-C_5)=\begin{pmatrix}C_5 & 0 \\ 0 & -C_5\end{pmatrix}$ is a Hermitian
matrix with eigenvalues $1$ and $-1$. Thus, each column vector $\mathbf{x}_j^*$ ($j\in\{1,\ldots,5\}$) of
$(A:B)^*$ may be written in the form
\begin{equation}
 \label{eq:imi}
 \mathbf{x}_j^* = \mathbf{x}_{j,1}^* + \mathbf{x}_{j,-1}^*
\end{equation}
where $\mathbf{x}_{j,\pm 1}^*$ belongs to the eigenspace corresponding
to the eigenvalue $\pm 1$ of $C_5\oplus(-C_5)$. The second equation of condition \eqref{fiftheqdsum} may now be
written
\begin{equation}
 \label{eq:eqinp}
 \mathbf{x}_{j,1}\mathbf{x}_{k,1}^* = \mathbf{x}_{j,-1}\mathbf{x}_{k,-1}^*.
\end{equation}
Taking $\mathbf{x}_{j,1}$ as the rows of $X_1$ and similarly for $X_{-1}$,
\eqref{eq:imi} may be summarized as
\begin{equation*}
 (A:B)=X_1+X_{-1}
\end{equation*}
and \eqref{eq:eqinp} as 
\begin{equation}
 \label{eq:eqinpsum}
 X_1X_1^*=X_{-1}X_{-1}^*.
\end{equation}
Now decompose
\begin{equation}
 \label{eq:decompi}
 \begin{pmatrix}C_5 & 0 \\ 0 & -C_5\end{pmatrix}
 =
 V\begin{pmatrix}-I_5 & 0 \\ 0 & I_5\end{pmatrix}V^*
\end{equation}
where $V$ is an arbitrary unitary matrix providing the diagonalization.
From the ordering of eigenvectors (columns of $V$) in \eqref{eq:decompi} and
the solution \eqref{eq:imi} in terms  of eigenvectors, the matrix $V$
may be chosen so that $(A:B)$ has the form
\begin{equation}
 \label{eq:ABcanon}
 (A:B) = (C : D)V^*.
\end{equation}
In particular, we will choose
\begin{equation}
 \label{eq:Vsimp}
 V =
 \begin{pmatrix}
   V_{-1} & \mathbf{v}_{-1} & 0  & V_1 & 0               & 0 \\
   0      & 0               & V_1& 0   & \mathbf{v}_{-1} & V_{-1}
 \end{pmatrix},
\end{equation}
where
\begin{equation*}
 V_{1} = \frac1{\sqrt2}
 \begin{pmatrix}
  1 & 0 \\
  0 & 1 \\
  0 & 0 \\
  0 & 1 \\
 -1 & 0
 \end{pmatrix}
  =
 \frac1{\sqrt2}
 \begin{pmatrix} I_2 \\ 0 \\ -C_2 \end{pmatrix},
\end{equation*}
\begin{equation*}
 V_{-1} = \frac1{\sqrt2}
 \begin{pmatrix}
  1 & 0 \\
  0 & 1 \\
  0 & 0 \\
  0 &-1 \\
  1 & 0
 \end{pmatrix}
  =
 \frac1{\sqrt2}
 \begin{pmatrix} I_2 \\ 0 \\ C_2 \end{pmatrix},
 \qquad
 \mathbf{v}_{-1} = \begin{pmatrix} 0 \\ 0 \\ 1 \\ 0 \\ 0 \end{pmatrix}
\end{equation*}
and
\begin{equation*}
 C_2 = \begin{pmatrix} 0 & -1 \\ 1 & 0 \end{pmatrix}.
\end{equation*}
Writing
\begin{equation*}
 V =
 \begin{pmatrix}
  \mathbf{y}_{1,-1}^* &
  \cdots &
  \mathbf{y}_{5,-1}^* &
  \mathbf{y}_{1,1}^* &
  \cdots &
  \mathbf{y}_{5,1}^*
 \end{pmatrix},%\qquad
% V^* =
% \begin{pmatrix}
%  \mathbf{y}_{1,-1} \\
%  \vdots \\
%  \mathbf{y}_{5,-1} \\
%  \mathbf{y}_{1,1} \\
%  \vdots \\
%  \mathbf{y}_{5,1} \\
% \end{pmatrix},
\end{equation*}
where $V$ is unitary and each $\mathbf{y}_{j,\pm 1}$ is an eigenvector corresponding
to the eigenvalue $\pm 1$. Since each row of $X_{\pm 1}$ is a linear combination
of $\mathbf{y}_{j,\pm1}$,
\begin{equation*}
 X_1
 =
 C
 \begin{pmatrix}
  \mathbf{y}_{1,1} \\
  \vdots \\
  \mathbf{y}_{5,1}
 \end{pmatrix},\qquad
 X_{-1}
 =
 D
 \begin{pmatrix}
  \mathbf{y}_{1,-1} \\
  \vdots \\
  \mathbf{y}_{5,-1}
 \end{pmatrix}
\end{equation*}
(for some $5\times 5$ matrices $C$ and $D$)
so that \eqref{eq:eqinpsum} becomes
\begin{equation*}
 CC^*=DD^*.
\end{equation*}
The singular value decompositions $C=U_C\Sigma_C V_C^*$ and $D=U_D\Sigma_D V_D^*$ yields from
\begin{equation*}
 \sigma_C^2=(U_C^*U_D)\Sigma_D^2(U_C^*U_D)^*
\end{equation*}
and by uniqueness of positive definite square roots,
\begin{equation*}
 \Sigma_C=(U_C^*U_D)\Sigma_D(U_C^*U_D)^*.
\end{equation*}
Hence,
\begin{align*}
 (A:B)&=(U_C\Sigma_C:U_D\Sigma_D)\begin{pmatrix}V_C& 0\\ 0 & V_D\end{pmatrix}^*V^* \\
      &=U_C(\Sigma_C:U_C^*U_D\Sigma_D)\begin{pmatrix}V_C& 0\\ 0 & V_D\end{pmatrix}^*V^* \\
      &=U_C(\Sigma_C:\Sigma_C)\begin{pmatrix}V_C& 0\\ 0 & V_D(U_C^*U_D)^*\end{pmatrix}^*V^* \\
      &=U_C\Sigma_C(I_5:I_5)\begin{pmatrix}V_C& 0\\ 0 & V_D(U_C^*U_D)^*\end{pmatrix}^*V^*
\end{align*}
yields the solution \eqref{eq:imi}  and satisfies \eqref{eq:eqinp}.
Since $\rank(A:B)=5$, we have $\rank(\Sigma_C)=5$ and hence $\Sigma_C$ is invertible.
By invariance of the boundary conditions under elementary row operations, we obtain
the general form
\begin{equation}
 \label{eq:gencanon}
 (A:B)=(I_5:I_5)\begin{pmatrix}V_X& 0\\ 0 & V_Y\end{pmatrix}^*V^*
\end{equation}
where $V_X$ and $V_Y$ are arbitrary unitary matrices. Here, the first
5 columns of $V$ are eigenvectors corresponding to the eigenvalue $-1$ of $C_5\oplus(-C_5)$,
and the remaining 5 columns correspond to the eigenvalue $1$.
We write $V$ as the block matrix
\begin{equation*}
 V = \begin{pmatrix} V_{11} & V_{12} \\ V_{21} & V_{22} \end{pmatrix}
\end{equation*}
so that \eqref{eq:gencanon} becomes
\begin{equation*}
 (A:B) = (V_X^*V_{11}^*+V_Y^*V_{12}^* : V_X^*V_{21}^* + V_Y^*V_{22}^*)
\end{equation*}
where
\begin{equation*}
 \begin{pmatrix}C_5 & 0 \\ 0 & -C_5\end{pmatrix}
 \begin{pmatrix}V_{11} \\ V_{21}\end{pmatrix}
 =
 -\begin{pmatrix}V_{11} \\ V_{21}\end{pmatrix},
 \qquad
 \begin{pmatrix}C_5 & 0 \\ 0 & -C_5\end{pmatrix}
 \begin{pmatrix}V_{12} \\ V_{22}\end{pmatrix}
 =
  \begin{pmatrix}V_{12} \\ V_{22}\end{pmatrix}.
\end{equation*}
Again, since the boundary conditions are invariant under row operations, we will assume
\begin{equation}
 \label{eq:finalcanon}
 (A:B) = (V_{11}^*+WV_{12}^* : V_{21}^* + WV_{22}^*)
\end{equation}
where $W=V_XV_Y^*$ is unitary.
From \eqref{eq:Vsimp}, we have
\begin{equation*}
% \label{eq:vblock}
 \begin{split}
  V_{11} = \frac1{\sqrt2}\begin{pmatrix} I_2 & 0 & 0_2 \\ 0 & \sqrt2 & 0 \\ C_2 & 0 & 0_2 \end{pmatrix},\qquad
  V_{12} = \frac1{\sqrt2}\begin{pmatrix} I_2 & 0 & 0_2 \\ 0 & 0 & 0 \\ -C_2 & 0 & 0_2 \end{pmatrix},\\
  V_{21} = \frac1{\sqrt2}\begin{pmatrix} 0_2 & 0 & I_2 \\ 0 & 0 & 0 \\ 0_2 & 0 & -C_2 \end{pmatrix},\qquad
  V_{22} = \frac1{\sqrt2}\begin{pmatrix} 0_2 & 0 & I_2 \\ 0 & \sqrt2 & 0 \\ 0_2 & 0 & C_2 \end{pmatrix}.
 \end{split}
\end{equation*}
Choosing appropriate $W$ provides the remaining canonical forms. Thus
\begin{align*} 
A &     = \frac1{\sqrt2}\left[
       \begin{pmatrix}
        I_2 & 0      & -C_2 \\
          0 & \sqrt2 & 0  \\
          0 & 0      & 0
       \end{pmatrix}
       +
       W
       \begin{pmatrix}
        I_2 & 0 & C_2 \\
          0 & 0 & 0  \\
          0 & 0 & 0
       \end{pmatrix}
      \right], \\
B &    = \frac1{\sqrt2}\left[
       \begin{pmatrix}
          0 & 0 & 0 \\
          0 & 0 & 0 \\
        I_2 & 0 & C_2
       \end{pmatrix}
       +
       W
       \begin{pmatrix}
          0 & 0      & 0 \\
          0 & \sqrt2 & 0 \\
        I_2 & 0      & -C_2
       \end{pmatrix}
      \right].
\end{align*}
Let
\begin{equation*}
 W = \begin{pmatrix} W_1 & \mathbf{w}_1 & W_2 \\ \mathbf{w}_2^T & w_3 & \mathbf{w}_4^T \\ W_3 & \mathbf{w}_5 & W_4 \end{pmatrix}
\end{equation*}
where $W_1, W_2,W_3,W_4 \in M_2(\mathbb{C})$, $\mathbf{w}_1, \mathbf{w}_2, \mathbf{w}_4, \mathbf{w}_5 \in\mathbb{C}^2$ and $w_3\in\mathbb{C}$.
It follows that
\begin{equation} 
\label{eq:factor1}
\begin{split}
A &   = \frac1{\sqrt2}
       \begin{pmatrix}
        I_2+W_1 & 0 & -C_2+W_1C_2 \\
        \mathbf{w}_2^T & \sqrt{2} & \mathbf{w}_2^TC_2  \\
        W_3 & 0 & W_3C_2
       \end{pmatrix} \\
  &   = \frac1{\sqrt2}
        \begin{pmatrix} W_1 & 0 & I_2 \\ \mathbf{w}_2^T & 1 & 0 \\ W_3 & 0 & 0 \end{pmatrix}
        \begin{pmatrix}
         I_2 &      0 &  C_2 \\
           0 & \sqrt2 &    0 \\
         I_2 &      0 & -C_2
        \end{pmatrix}, \\
B &    = \frac1{\sqrt2}
       \begin{pmatrix}
        W_2 & \sqrt2\mathbf{w}_1 & W_2C_2 \\
        \mathbf{w}_4^T & \sqrt2 w_3 & \mathbf{w}_4^TC_2  \\
        I_2+W_4 & \sqrt2\mathbf{w}_5 & C_2-W_4C_2
       \end{pmatrix} \\
  &   = \frac1{\sqrt2}
        \begin{pmatrix} 0 & \mathbf{w}_1 & W_2 \\ 0 & w_3 & \mathbf{w}_4^T \\ I_2 & \mathbf{w}_5 & W_4 \end{pmatrix}
        \begin{pmatrix}
         I_2 &      0 &  C_2 \\
           0 & \sqrt2 &    0 \\
         I_2 &      0 & -C_2
        \end{pmatrix},
\end{split}
\end{equation}
and hence,
\begin{align}
 \rank(A) &= 3+\rank(W_3), \label{eq:rankA5} \\
 \rank(B) &= 2+\rank\begin{pmatrix}\mathbf{w}_1 & W_2 \\ w_3 & \mathbf{w}_4^T \end{pmatrix}, \label{eq:rankB5}
\end{align}
The CS-decomposition, described in detail in \cite{paige94} and \cite[Theorem 2.7.1]{horn12},
provides a useful way to speak about rank. In particular, we obtain using \cite[Theorem 4.1]{fuhr18}
\begin{equation*}
 W = \begin{pmatrix} G_1 & 0 \\ 0 & G_2 \end{pmatrix}
     \begin{pmatrix} (I_2+F)/2 & 0 & (I_2-F)/2 \\ 0 & 1 & 0 \\ (I_2-F)/2 & 0 & (I_2+F)/2 \end{pmatrix}
     \begin{pmatrix} G_3 & 0 \\ 0 & G_4 \end{pmatrix}
\end{equation*}
for some $2\times 2$ unitary matrices $F$, $G_1$, $G_3$, and $3\times 3$ unitary matrices $G_2$ and $G_4$.
Since
\begin{equation*}
 \begin{pmatrix} (I_2+F)/2 & (I_2-F)/2 \\ (I_2-F)/2 & (I_2+F)/2 \end{pmatrix}
\end{equation*}
is a $4\times 4$ unitary matrix, we can further apply \cite[Corollary 3.1]{fuhr18} and express $W$ by
\begin{equation*}
 W = \begin{pmatrix} U_1 & 0 \\ 0 & U_2 \end{pmatrix}
     \begin{pmatrix} C & 0 & S \\ 0 & 1 & 0 \\ -S & 0 & C \end{pmatrix}
     \begin{pmatrix} V_1 & 0 \\ 0 & V_2 \end{pmatrix}
\end{equation*}
for some $2\times 2$ unitary matrices $U_1$, $V_1$, and $3\times 3$ unitary matrices $U_2$ and $V_2$
and positive semi-definite diagonal matrices $C$ and $S$ satisfying $C^2+S^2=I_2$.
Consequently,
\begin{equation*}
 W = \begin{pmatrix}
       U_1CV_1 & U_1\begin{bmatrix}0 & S\end{bmatrix}V_2 \\
       U_2\begin{bmatrix} 0 \\ -S\end{bmatrix}V_1 & U_2\begin{bmatrix} 1 & 0 \\ 0 &  C\end{bmatrix}V_2
      \end{pmatrix}
\end{equation*}
so that, using $\mathbf{S}=\begin{bmatrix}0&S\end{bmatrix}$ and $\mathbf{C}=1\oplus C$,
\begin{equation}
 \label{eq:W5}
 \begin{pmatrix} W_1 & \mathbf{w}_1 & W_2 \\ \mathbf{w}_2^T & w_3 & \mathbf{w}_4^T \\ W_3 & \mathbf{w}_5 & W_4 \end{pmatrix}
 =
 \begin{pmatrix}
  U_1CV_1 & U_1\mathbf{S}V_2 \\
  -U_2\mathbf{S}^*V_1 & U_2\mathbf{C}V_2
 \end{pmatrix}
\end{equation}
Hence, \eqref{eq:factor1} gives
\begin{equation*}
 (A:B) =
 \dfrac{1}{\sqrt{2}}
 \begin{pmatrix}
  W_1 & 0 & I_2
  & 0 & \mathbf{w}_1 & W_2 \\
  \mathbf{w}_2^T & 1 & 0
  & 0 & w_3 & \mathbf{w}_4^T \\
  W_3 & 0 & 0
  & I_2 & \mathbf{w}_5 & W_4
 \end{pmatrix} Q_4
\end{equation*}
where
\begin{equation*}
Q_4 = 
 \begin{pmatrix}
  I_2 &      0 &  C_2 & 0 & 0 & 0 \\
    0 & \sqrt2 &    0 & 0 & 0 & 0 \\
  I_2 &      0 & -C_2 & 0 & 0 & 0 \\
  0 & 0 & 0 & I_2 &      0 &  C_2 \\
  0 & 0 & 0 &   0 & \sqrt2 &    0 \\
  0 & 0 & 0 & I_2 &      0 & -C_2
 \end{pmatrix}.
\end{equation*}
It follows that
\begin{align*}
 (A:B) &=
 \dfrac{1}{\sqrt{2}}
 \begin{pmatrix}
  U_1CV_1 & \begin{bmatrix} 0 & I_2 & 0_2\end{bmatrix} & U_1\mathbf{S}V_2 \\
  -U_2\mathbf{S}^*V_1 & \begin{bmatrix} 1 & 0 & 0 \\ 0 & 0 & I_2 \end{bmatrix} & U_2\mathbf{C}V_2
 \end{pmatrix} Q_4 \\
 &=
 \dfrac{1}{\sqrt{2}}
 \begin{pmatrix}
  U_1 & 0 \\
    0 & U_2
 \end{pmatrix}
 \begin{pmatrix}
  C & \begin{bmatrix} 0 & U_1^* & 0_2\end{bmatrix} & \mathbf{S} \\
  -\mathbf{S}^* & U_2^*\begin{bmatrix} 1 & 0 & 0 \\ 0 & 0 & I_2 \end{bmatrix} & \mathbf{C}
 \end{pmatrix}
 \begin{pmatrix}
  V_1 & 0 & 0 \\
    0 & I_5 & 0  \\
  0 & 0 & V_2
 \end{pmatrix} Q_4 \\
 &=
 \dfrac{1}{\sqrt{2}}
 \begin{pmatrix}
  U_1 & 0 \\
    0 & U_2
 \end{pmatrix}
 \begin{pmatrix}
  C & 0 & I_2 & 0 & \mathbf{S} \\
  -\mathbf{S}^* & I_3 & 0 & I_3 & \mathbf{C}
 \end{pmatrix} \\
 & \qquad\times
 \begin{pmatrix}
  V_1 & 0 & 0 & 0 & 0 \\
    0 & U_2^*\begin{bmatrix}1\\0\\0\end{bmatrix} & 0 & 0 & 0 \\
    0 & 0 & U_1^* & 0 & 0 \\
    0 & 0 & 0 & U_2^*\begin{bmatrix}0\\ I_2 \end{bmatrix} & 0  \\
  0 & 0 & 0 & 0 & V_2
 \end{pmatrix} Q_4 \\
 &=
 \dfrac{1}{\sqrt{2}}
 \begin{pmatrix}
  U_1 & 0 \\
    0 & U_2
 \end{pmatrix}
 \begin{pmatrix}
  C & 0 & I_2 & 0 & \mathbf{S} \\
  -\mathbf{S}^* & I_3 & 0 & I_3 & \mathbf{C}
 \end{pmatrix}
 \begin{pmatrix}
  V_1 & 0 & 0 & 0 & 0 \\
    0 & U_2^* & 0 & 0 & 0 \\
    0 & 0 & U_1^* & 0 & 0 \\
    0 & 0 & 0 & U_2^* & 0  \\
  0 & 0 & 0 & 0 & V_2
 \end{pmatrix}
 \\
 &\qquad \times
 \begin{pmatrix}
  I_2 & 0 & 0 & 0 & 0 \\
    0 & \begin{bmatrix}1\\0\\0\end{bmatrix} & 0 & 0 & 0 \\
    0 & 0 & I_2 & 0 & 0 \\
    0 & 0 & 0 & \begin{bmatrix}0\\ I_2 \end{bmatrix} & 0  \\
  0 & 0 & 0 & 0 & I_3
 \end{pmatrix} Q_4.
\end{align*}
It follows from \eqref{eq:rankA5}, \eqref{eq:rankB5} and \eqref{eq:W5} that
\begin{align*}
 \rank(A)&=3+\rank\left(\begin{bmatrix} 0 & 1 & 0 \\ 0 & 0 & 1 \end{bmatrix}
                         U_2(-\mathbf{S}^*)V_1\right)
          =3+\rank\left(\begin{bmatrix} 0 & 1 & 0 \\ 0 & 0 & 1 \end{bmatrix}U_2\mathbf{S}^*\right), \\
 \rank(B)&=2+\rank\begin{pmatrix}
               U_1\mathbf{S}V_2 \\
               \begin{bmatrix}1&0&0\end{bmatrix}U_2\mathbf{C}V_2
             \end{pmatrix}
          =2+\rank\begin{pmatrix}
                   \mathbf{S} \\
                   \begin{bmatrix}1&0&0\end{bmatrix}U_2\mathbf{C}
                  \end{pmatrix}.
\end{align*}
We note that $\rank(A)=\rank(AA^*)=\rank(A^*A)$ and similarly for $B$.
Using $\mathbf{S}^*\mathbf{S}=I_3-\mathbf{C}^*\mathbf{C}=I_3-\mathbf{C}^2$, we find
\begin{align*}
  \left(\begin{bmatrix} 0 & 1 & 0 \\ 0 & 0 & 1 \end{bmatrix}U_2\mathbf{S}^*\right)
  \left(\begin{bmatrix} 0 & 1 & 0 \\ 0 & 0 & 1 \end{bmatrix}U_2\mathbf{S}^*\right)^*
 &= \begin{pmatrix} 0 & 1 & 0 \\ 0 & 0 & 1 \end{pmatrix}
    U_2\mathbf{S}^*\mathbf{S}U_2^*
    \begin{pmatrix} 0 & 0 \\ 1 & 0 \\ 0 & 1 \end{pmatrix}\\
 &= I_2
  - \begin{pmatrix} 0 & 1 & 0 \\ 0 & 0 & 1 \end{pmatrix}
    U_2\mathbf{C}^2U_2^*
    \begin{pmatrix} 0 & 0 \\ 1 & 0 \\ 0 & 1 \end{pmatrix} \\
 \begin{pmatrix} \mathbf{S} \\ \begin{bmatrix}1&0&0\end{bmatrix}U_2\mathbf{C} \end{pmatrix}^*
 \begin{pmatrix} \mathbf{S} \\ \begin{bmatrix}1&0&0\end{bmatrix}U_2\mathbf{C} \end{pmatrix}
  &= \mathbf{S}^*\mathbf{S}
   + \mathbf{C}^*U_2^*
     \begin{pmatrix}1&0&0\\0&0&0\\0&0&0\end{pmatrix}
     U_2\mathbf{C} \\
  &= I_3-\mathbf{C}^*U_2^*
     \begin{pmatrix}0&0&0\\0&1&0\\0&0&1\end{pmatrix}
     U_2\mathbf{C} \\
  &= I_3-\mathbf{C}U_2^*
     \begin{pmatrix} 0 & 0 \\ 1 & 0 \\ 0 & 1 \end{pmatrix}
     \begin{pmatrix} 0 & 1 & 0 \\ 0 & 0 & 1 \end{pmatrix}
     U_2\mathbf{C}.
\end{align*}
Hence,
\begin{align*}
 \rank(A)&=3+
  \rank\left(I_2 - KK^*\right), \\
 \rank(B)&=2+
  \rank\left(I_3 - K^*K\right).
\end{align*}
where
\begin{equation*}
 K=\begin{pmatrix}0&1&0\\0&0&1\end{pmatrix}U_2\mathbf{C}.
\end{equation*}
Since $K^*K$ and $KK^*$ have the same non-zero eigenvalues with the
same multiplicities $KK^*$ and $K^*K$ have the same number of
unit eigenvalues (i.e. equal to 1) so $\Null(I_2-KK^*)=\Null(I_3-K^*K)$.
By the rank-nullity duality
\begin{equation*}
 \rank(A) = 3+2-\Null(I_2-KK^*) = 2+3-\Null(I_3-K^*K) = \rank(B).
 \qedhere
\end{equation*}
\end{proof}

\section{Canonical forms for $2n+1$-th order differential operators}\label{cano2n+1}

We consider on the interval $J=(a,b)$, $-\infty\le a<b\le \infty$, the $2n+1$-th order differential equation 
with formally self-adjoint differential expression (with smooth coefficients) \cite{hinton78,lin23}
\begin{equation}\label{eq:2n+1th}
 My=\sum_{k=0}^n (-1)^k\left(i\left[\left(q_{n-k}y^{(k)}\right)^{(k+1)}
   +\left(q_{n-k}y^{(k+1)}\right)^{(k)}\right]+\left(p_{n-k}y^{(k)}\right)^{(k)}\right)=\lambda wy,
\end{equation}
where $q_0^{-1}$ exists on $J$, $p_j,q_j\in C^j(J)$ are sufficiently smooth real-valued functions on $J$ and
$w\in L(J,\mathbb R)$ is a real-valued Lebesgue integrable function on $J$, $w>0$ a.e. on $J$.
Let
\begin{equation*}
 \theta:=\dfrac1{\sqrt2}+\dfrac{i}{\sqrt2}.
\end{equation*}
If the coefficients in \eqref{eq:2n+1th} are not smooth, we introduce the quasi-derivatives
of $y$ given by
\begin{equation*}
 y^{[k]}=y^{(k)},\qquad 0\leq k\leq n-1,
\end{equation*}
and
\begin{align*}
% &y^{[k]}=y^{(k)}, && \text{if $0\leq k\leq n-1$} \\
 &y^{[n]}=-\theta q_0(y^{[n-1]})', \\
 &y^{[n+1]} = -\theta q_0(y^{[n]})'+i\theta{p_0}{q_0}^{-1}y^{[n]}-iq_1y^{[n-1]}, \\
 &y^{[n+2]} = -(y^{[n+1]})'-\theta{q_1}{q_0}^{-1}y^{[n]}+p_1y^{[n-1]}-iq_2y^{[n-2]}, && \text{if $n\neq 1$} \\
% &y^{[n+k+1]} = -(y^{[n+k]})'+p_ky^{[n-k]}+i(q_ky^{[n-k+1]}-q_{k+1}y^{[n-k-1]}), && \text{if $2\leq k\leq n-1$}\\
 &y^{[n+k]} = -(y^{[n+k-1]})'+p_{k-1}y^{[n-k+1]}+i(q_{k-1}y^{[n-k+2]}-q_{k}y^{[n-k]}), && \text{if $3\leq k\leq n$}\\
 &y^{[2n+1]} = -(y^{[2n]})'-iq_ny'+p_ny.
\end{align*}
Observe that $n+2=2n+1$ if and only if $n=1$ if and only if $n+2>2n$. Hence, the condition $n\neq 1$
is written as $n+2\leq 2n$ in \cite{hinton78,lin23}.
In either case, the boundary conditions have the same form.
Let $Y=\left(y, y', y'', y^{[3]}, y^{[4]},\ldots,y^{[2n]}\right)^{\top}$.
We now consider the $2n+1$-th order boundary value problem defined by \eqref{eq:2n+1th} and the boundary conditions
\begin{align}
 \label{2n+1theq2}
 AY(a)+BY(b)=0, \quad A, B\in M_{2n+1}(\mathbb C).
\end{align}
For the boundary conditions \eqref{2n+1theq2} with the assumptions made so far, \cite[Theorem 2.4]{MolZet2} leads to  
\begin{proposition}%
 \label{2n+1prop1}%
 Let $C_{2n+1}$ be the symplectic matrix of order $2n+1$ defined by
 \begin{align}\label{2n+1theq4}
  C_{2n+1}=\left((-1)^r\delta_{r,2n+2-s}\right)_{r,s=1}^{2n+1},
 \end{align}
 where $\delta$ is the Kronecker delta.
 Then problems \eqref{eq:2n+1th}--\eqref{2n+1theq2} are self-adjoint if and only if 
 \begin{align}\label{2n+1theq3}
  \rank(A:B)=2n+1 \quad \textrm{and}\quad AC_{2n+1}A^*=BC_{2n+1}B^*.
 \end{align}
\end{proposition}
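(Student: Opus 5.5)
The plan is to derive Proposition~\ref{2n+1prop1} exactly as Proposition~\ref{fifthprop1} was derived: as a specialization of \cite[Theorem 2.4]{MolZet2} to the quasi-differential system attached to \eqref{eq:2n+1th}. The argument has three steps.

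\textbf{Step 1: first-order system.} First I will rewrite $y^{[2n+1]}=\lambda wy$ as a first-order system $Y'=(P-\lambda W)Y$ for $Y=(y,\dots,y^{[2n]})^\top$. The matrix $P$ is read off from the recursive definitions of the quasi-derivatives. The cases $n=1$ and $n\ge 2$ are treated separately, since the line for $y^{[n+2]}$ exists only when $n+2\le 2n$.

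\textbf{Step 2: the Lagrange identity (the main obstacle).} The core of the proof is to show that the quasi-differential expression is formally symmetric with respect to $C_{2n+1}$. That is, for $f,g$ in the maximal domain,
\[
\bar g\, f^{[2n+1]} - f\,\overline{g^{[2n+1]}} = \frac{d}{dx}\Bigl(c\, G^* C_{2n+1} F\Bigr)
\]
for a suitable nonzero constant $c$. This is equivalent to the Hermitian-type relation $C_{2n+1}P + P^*C_{2n+1} = 0$, with the signs of $C_{2n+1}=((-1)^r\delta_{r,2n+2-s})$ matched. Note that $C_{2n+1}$ is skew-Hermitian up to a factor: since $2n+2$ is even, the pairing $r\leftrightarrow 2n+2-r$ gives $(-1)^r$ and $(-1)^{2n+2-r}$ equal. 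So $C_{2n+1}$ is real symmetric with a middle entry $(-1)^{n+1}$, and $iC_{2n+1}$ is the skew-Hermitian form that appears. I will check the identity entry by entry on the anti-diagonal, using three facts: the coefficients $p_j,q_j$ are real; $\theta\bar\theta=1$ and $\theta^2=i$; and the quasi-derivative terms in $y^{[n+k]}$ pair with those in $y^{[n-k]}$. The delicate entries are the middle ones involving $\theta q_0$, which is where the factor $\theta$ is needed. These computations are routine but index-heavy, and the general indices $3\le k\le n$ must be checked uniformly. This is also where I expect the real work to lie.

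\textbf{Step 3: applying \cite[Theorem 2.4]{MolZet2}.} With the symmetry in hand, \cite[Theorem 2.4]{MolZet2} states that the two-point boundary conditions $AY(a)+BY(b)=0$ define a self-adjoint realization if and only if two conditions hold:
\begin{itemize}
\item $\rank(A:B)=2n+1$, so that the conditions are independent and of maximal number, equal to the deficiency;
\item $AEA^*=BEB^*$, where $E$ is the form matrix in the Lagrange identity.
\end{itemize}
Taking $E$ to be $C_{2n+1}$ up to the scalar $c$, and noting that the condition is invariant under nonzero scalar multiples, gives \eqref{2n+1theq3}.

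Here the regularity of endpoints is assumed implicitly, as in the fifth-order case, through the integrability assumptions on $q_0^{-1},p_j,q_j,w$. If needed, I will note that \eqref{2n+1theq3} reduces to \eqref{fiftheq3} for $n=2$, as a consistency check.
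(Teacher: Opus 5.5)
Your route is the same as the paper's: the paper gives no argument beyond invoking \cite[Theorem 2.4]{MolZet2}, exactly as for Proposition~\ref{fifthprop1}. There is, however, a genuine gap, and it sits in the step you add and call the core. The identity in Step~2,
\[
\bar g\, f^{[2n+1]}-f\,\overline{g^{[2n+1]}}=\frac{d}{dx}\bigl(c\,G^*C_{2n+1}F\bigr),
\]
cannot hold for any constant $c$, whatever the interior entries do. The only terms on the right that produce $f^{[2n+1]}$ or $\overline{g^{[2n+1]}}$ come from the corner entries $(C_{2n+1})_{1,2n+1}=(C_{2n+1})_{2n+1,1}=-1$, that is, from $-c\bigl(\bar g f^{[2n]}+\overline{g^{[2n]}}f\bigr)$. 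Since $(y^{[2n]})'=-y^{[2n+1]}-iq_ny'+p_ny$, these contribute $c\bigl(\bar g f^{[2n+1]}+f\,\overline{g^{[2n+1]}}\bigr)$, with a plus sign. Matching with the left side forces both $c=1$ and $c=-1$.

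Equivalently, the bracket matrix $H$ (with $[f,g]=G^*HF$) that this recursion produces is skew-Hermitian with $H_{1,2n+1}=-1$ and $H_{2n+1,1}=+1$. Every skew-Hermitian multiple of the real symmetric $C_{2n+1}$, in particular $iC_{2n+1}$, has two equal, purely imaginary corners. So your expectation that $iC_{2n+1}$ is the form that appears is wrong for this normalization. For example, take $n=1$, $q_0\equiv1$, $p_0=p_1=q_1=0$. Then $Y=(y,-\theta y',iy'')^\top$ and $y^{[3]}=-iy'''$, so
\[
H=\begin{pmatrix}0&0&-1\\0&i&0\\1&0&0\end{pmatrix}=D\,(iC_3)\,D^*,\qquad D=\mathrm{diag}(-i,1,1).
\]
Your relation $C_{3}P+P^*C_{3}=0$ also fails here: its $(2,3)$ entry is $i\sqrt2$. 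By contrast, $HP+P^*H=0$ holds.

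Consequently Step~3 does not give \eqref{2n+1theq3} for the vector $Y$ as defined. What \cite[Theorem 2.4]{MolZet2} actually yields is $AH^{-1}A^*=BH^{-1}B^*$. At best one has $H=D(\pm iC_{2n+1})D^*$ for a fixed invertible $D$, as in the example. Then the criterion is \eqref{2n+1theq3} for the pair $(AD^{-*}:BD^{-*})$, i.e.\ for the rescaled boundary vector $D^*Y$, not for $(A:B)$ itself.

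The difference is not cosmetic. Let $U$ be the permutation matrix exchanging the first and last entries. Then $UC_{2n+1}U=C_{2n+1}$, so $A=I$, $B=-U$ satisfy \eqref{2n+1theq3}. Yet for $F(a)=UF(b)$ one has $[f,g]_a^b=G(b)^*(H-UHU)F(b)$, and $H-UHU$ has $(1,2n+1)$ entry $-2$, so these boundary conditions are not self-adjoint.

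To close the gap you must first renormalize. One option is to rescale $Y$ by the appropriate fixed diagonal matrix. The other is to adopt Zettl's convention, where the symmetric expression is $i^{2n+1}y^{[2n+1]}$ with $(y^{[2n]})'=+y^{[2n+1]}+\cdots$; there the two corners come out equal. Only then can the bracket be a scalar multiple of $C_{2n+1}$ and Steps~2--3 go through. The paper's bare citation never addresses this normalization, so your explicit verification is exactly where the mismatch shows up.
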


Lemma \ref{lem:fifth} and its proof provide the inspiration
for the following theorems and their proof.

\begin{theorem}
 \label{thm:2n+1th-odd}%
 Let $n\in\mathbb{N}$ be odd and let $A$ and $B$ be $(2n+1)\times (2n+1)$ matrices satisfying 
 \begin{equation*}
  \rank(A:B)=2n+1 \quad \textrm{and}\quad AC_{2n+1}A^*=BC_{2n+1}B^*.
 \end{equation*}
 Then, there exist $n\times n$ unitary matrices $U_1$ and $V_1$,
 $(n+1)\times (n+1)$ unitary matrices $U_2$ and $V_2$
 and real diagonal $n\times n$ matrices $C$ and $S$
 such that
 \begin{enumerate}[label=(\alph*)]
  \item
   $C^2+S^2=I_n$,
  \item
   $(A:B)$ has the canonical form
   \begin{equation*}
   (A:B) = \dfrac{1}{\sqrt{2}} Q_1
           \begin{pmatrix}
            \mathbf{C} & I_{n+1} & 0 & I_{n+1} & \mathbf{S} \\
            -\mathbf{S}^* & 0 & I_n & 0 & C
           \end{pmatrix}
           Q_2,
   \end{equation*}
  \item $\rank(A) = 2n+1-\Null(I_n-KK^*) = \rank(B)$,
  \item the boundary conditions are
   \begin{enumerate}[label=(\roman*)]
    \item mixed, if and only if $\rank(I_n-KK^*)<n$,
    \item coupled, if and only if $\rank(I_n-KK^*)=n$,
   \end{enumerate}
 \end{enumerate}
 \medskip
 where
 $\mathbf{S}=\begin{pmatrix} S & 0 \end{pmatrix}^*$,
 $\mathbf{C}=C\oplus 1=\begin{pmatrix}C&0\\0&1\end{pmatrix}$,
 $K=\begin{pmatrix}I_n&0\end{pmatrix}U_1\mathbf{C},$
 \begin{gather*}
  Q_1 = \begin{pmatrix}
         U_1 & 0 \\
           0 & U_2
        \end{pmatrix},\quad
  Q_2 = \begin{pmatrix}
         V_1 &     0 &     0 &     0 &   0 \\
           0 & U_1^* &     0 &     0 &   0 \\
           0 &     0 & U_2^* &     0 &   0 \\
           0 &     0 &     0 & U_1^* &   0  \\
           0 &     0 &     0 &     0 & V_2
        \end{pmatrix}
        Q_3, \\
  Q_3 = \begin{pmatrix}
         I_{n+1} & 0 & 0 & 0 & 0 \\
           0 & \begin{bmatrix} I_n \\ 0 \end{bmatrix} & 0 & 0 & 0 \\
           0 & 0 & I_n & 0 & 0 \\
           0 & 0 & 0 & \begin{bmatrix}0\\\vdots \\0\\1\end{bmatrix} & 0  \\
           0 & 0 & 0 & 0 & I_n
        \end{pmatrix}Q_4, \\
  Q_4 = \begin{pmatrix}
         I_n &      0 & (-1)^{n+1}C_n^* & 0 & 0 & 0 \\
           0 & \sqrt2 &             0 & 0 & 0 & 0 \\
         I_n &      0 &   (-1)^{n}C_n^* & 0 & 0 & 0 \\
         0 & 0 & 0 & I_n &      0 &  (-1)^{n+1}C_n^* \\
         0 & 0 & 0 &   0 & \sqrt2 &              0 \\
         0 & 0 & 0 & I_n &      0 &    (-1)^{n}C_n^*
        \end{pmatrix}.
 \end{gather*}
\end{theorem}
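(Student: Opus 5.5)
We mirror the proof of Lemma \ref{lem:fifth}, with $5$ replaced by $2n+1$, $2$ by $n$ and $3$ by $n+1$. The parity of $n$ affects only the sign pattern of the eigenvectors of $C_{2n+1}$.

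\textbf{Step 1: reduce to a single unitary $W$.} First rewrite \eqref{2n+1theq3} as $(A:B)(C_{2n+1}\oplus(-C_{2n+1}))(A:B)^*=0$ with $\rank(A:B)=2n+1$. The matrix $C_{2n+1}$ is real, and for odd $n$ it is symmetric, since $(-1)^r=(-1)^{2n+2-r}$. Hence $C_{2n+1}\oplus(-C_{2n+1})$ is a Hermitian involution with eigenvalues $\pm1$, each of multiplicity $2n+1$.

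Repeating the eigenspace splitting argument, together with the singular value decompositions and the invertibility of $\Sigma_C$, gives
\begin{equation*}
 (A:B)=(V_{11}^*+WV_{12}^* : V_{21}^*+WV_{22}^*)
\end{equation*}
for an arbitrary $(2n+1)\times(2n+1)$ unitary $W$. Here $V$ is an explicit unitary diagonalizer.

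\textbf{Step 2: choose the eigenvectors of $C_{2n+1}$.} We pair coordinates $r$ and $2n+2-r$. The middle coordinate $n+1$ has sign $(-1)^{n+1}=+1$, so $\mathbf{e}_{n+1}$ is a $+1$ eigenvector of $C_{2n+1}$. This is why the odd-$n$ case differs from the fifth-order case, where $n=2$ and $\mathbf{e}_3$ was a $-1$ eigenvector.

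The remaining $\pm1$ eigenvectors are the columns of $\frac1{\sqrt2}\begin{pmatrix} I_n\\0\\ \pm(-1)^{n+1}C_n^*\end{pmatrix}$, with a suitable sign. Here $C_n^*$ is the appropriate anti-diagonal signed matrix; one checks $C_n^*C_n=I_n$ directly.

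Consequently the eigenvalue $+1$ eigenspace of $C_{2n+1}$ has dimension $n+1$, and that of $-C_{2n+1}$ at the endpoint $b$ has dimension $n$. This swaps the roles of the blocks compared with Lemma \ref{lem:fifth}.

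\textbf{Step 3: factor $A$ and $B$.} Next factor $A$ and $B$ exactly as in \eqref{eq:factor1}, with the right factor being the $(2n+1)$-block of $Q_4$. Partition $W$ conformally with blocks of sizes $n,1,n$. This yields
\begin{equation*}
 \rank A=(n+1)+\rank(\cdot)\quad\text{and}\quad \rank B=n+\rank(\cdot)
\end{equation*}
for appropriate off-diagonal subblocks of $W$.

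\textbf{Step 4: CS-decomposition of $W$.} Apply the CS-decomposition (\cite[Theorem 4.1, Corollary 3.1]{fuhr18}) with the partition $(n+1)+n$:
\begin{equation*}
 W=(U_1\oplus U_2)\begin{pmatrix}\mathbf{C}&\mathbf{S}\\-\mathbf{S}^*&C\end{pmatrix}(V_1\oplus V_2).
\end{equation*}
Here $U_1,V_1$ are $(n+1)\times(n+1)$, $U_2,V_2$ are $n\times n$, $\mathbf{C}=C\oplus1$ and $\mathbf{S}=(S\;0)^*$. The statement's size labels for $U_i,V_i$ should be read accordingly. Substituting and pulling the unitaries out exactly as in the fifth-order computation gives the form (b), with $Q_2,Q_3$ recording the embedding $\begin{bmatrix}I_n\\0\end{bmatrix}$ and the last unit vector.

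\textbf{Step 5: the rank formula.} Compute $\rank A=\rank(AA^*)$ and $\rank B=\rank(B^*B)$, and use $\mathbf{S}\mathbf{S}^*+\mathbf{C}^2=I$ to obtain
\begin{equation*}
 \rank A=(n+1)+\rank(I_n-KK^*),\qquad \rank B=n+\rank(I_{n+1}-K^*K),
\end{equation*}
with $K=\begin{pmatrix}I_n&0\end{pmatrix}U_1\mathbf{C}$. Since $KK^*$ and $K^*K$ share their nonzero spectra, $\Null(I_n-KK^*)=\Null(I_{n+1}-K^*K)$. Rank–nullity then gives (c).

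\textbf{Step 6: part (d).} Part (d) follows from (c) and Definition \ref{typesdef1}, generalized to rank $n+1+r$. Coupled means $r=n$, i.e.\ both $A$ and $B$ are invertible. Since $\rank(I_n-KK^*)=2n+1-\rank A-\ldots$ reduces to the condition $\rank(I_n-KK^*)=n$, full rank corresponds to coupled and anything less to mixed.

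\textbf{Main obstacle.} The hard part is the sign bookkeeping in Steps 2–3. We must verify that, with the $(-1)^{n}$ and $(-1)^{n+1}$ factors on $C_n^*$, the columns of $V$ really are orthonormal eigenvectors. We must also verify that the factorization of $A$ and $B$ through $Q_4$ lands the identity blocks in the positions shown in (b). I would check this first for $n=1$ and $n=3$ and then induct on the pattern of the anti-diagonal.
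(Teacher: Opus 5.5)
Your plan follows the paper's proof. It uses the same splitting of $C_{2n+1}\oplus(-C_{2n+1})$, the same reduction to $(A:B)=(V_{11}^*+WV_{12}^*:V_{21}^*+WV_{22}^*)$, the CS-decomposition with partition $(n+1)+n$, and the same Gram-matrix and $KK^*$/$K^*K$ argument. Your reading of the sizes ($U_1,V_1$ of order $n+1$, $U_2,V_2$ of order $n$) is exactly what the paper's proof uses, and it is forced by $\mathbf{C}$ and $K$ in the statement.

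There is, however, a bookkeeping slip in Steps 3 and 5. With $A=V_{11}^*+WV_{12}^*$, the $W$-free term $V_{11}^*$ has rank $n$. This is the dimension of the $(-1)$-eigenspace of $C_{2n+1}$ for odd $n$, since $\operatorname{tr}C_{2n+1}=(-1)^{n+1}=1$. So the factorization through $Q_4$ puts an $I_n$, not an $(n+1)$-identity, into the factor of $A$, and gives
\[
 \rank A=n+\rank\begin{pmatrix}\mathbf{w}_2^T & w_3\\ W_3 & \mathbf{w}_5\end{pmatrix},\qquad
 \rank B=n+1+\rank W_2 .
\]
Hence $\rank A=n+\rank(I_{n+1}-K^*K)$ and $\rank B=n+1+\rank(I_n-KK^*)$.

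You wrote the reverse pairing. That is the fifth-order (even $n$) pattern, and it contradicts your own Step 2 remark that the roles swap. The pairing matters for (b): the $I_n$ in the factor of $A$, together with the $I_n$ and $\mathbf{e}_{n+1}$ in the factor of $B$, is what produces the middle block $\begin{pmatrix}I_{n+1}&0&I_{n+1}\\0&I_n&0\end{pmatrix}$. Your formulas hold only a posteriori, once $\rank A=\rank B$ is known. Both pairings reduce to $2n+1-\Null(I_n-KK^*)$ by rank--nullity, so (c) survives. In Step 6 the identity you want is simply $\rank(I_n-KK^*)=\rank A-(n+1)$, after which (d) follows.

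Finally, the ``main obstacle'' needs no induction. Since $C_{2n+1}\mathbf{e}_s=(-1)^s\mathbf{e}_{2n+2-s}$, each vector $\mathbf{e}_j\pm(-1)^j\mathbf{e}_{2n+2-j}$ ($1\le j\le n$) is a $\pm1$-eigenvector for every $n$. Also $C_n^*=(-1)^{n+1}C_n$, which removes the sign ambiguity for odd $n$. As a minor point, $C_{2n+1}$ is symmetric for all $n$, not only odd $n$.
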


\begin{theorem}
 \label{thm:2n+1th-even}%
 Let $n\in\mathbb{N}$ be even and let $A$ and $B$ be $(2n+1)\times (2n+1)$ matrices satisfying 
 \begin{equation*}
  \rank(A:B)=2n+1 \quad \textrm{and}\quad AC_{2n+1}A^*=BC_{2n+1}B^*.
 \end{equation*}
 Then, there exist $(n+1)\times(n+1)$ unitary matrices $U_1$ and $V_1$,
 $n\times n$ unitary matrices $U_2$ and $V_2$
 and real diagonal $n\times n$ matrices $C$ and $S$
 such that
 \begin{enumerate}[label=(\alph*)]
  \item
   $C^2+S^2=I_n$,
  \item
   $(A:B)$ has the canonical form
   \begin{equation*}
   (A:B) = \dfrac{1}{\sqrt{2}} Q_1
           \begin{pmatrix}
            C & 0 & I_n & 0 & \mathbf{S} \\
            -\mathbf{S}^* & I_{n+1} & 0 & I_{n+1} & \mathbf{C}
           \end{pmatrix}
           Q_2,
   \end{equation*}
  \item $\rank(A) = 2n+1-\Null(I_n-KK^*) = \rank(B)$,
  \item the boundary conditions are
   \begin{enumerate}[label=(\roman*)]
    \item mixed, if and only if $\rank(I_n-KK^*)<n$,
    \item coupled, if and only if $\rank(I_n-KK^*)=n$,
   \end{enumerate}
 \end{enumerate}
 \medskip
 where
 $\mathbf{S}=\begin{pmatrix} 0 & S \end{pmatrix}$,
 $\mathbf{C}=1\oplus C=\begin{pmatrix}1&0\\0&C\end{pmatrix}$,
 $K=\begin{pmatrix}0&I_n\end{pmatrix}U_2\mathbf{C},$
 \begin{gather*}
  Q_1 = \begin{pmatrix}
         U_1 & 0 \\
           0 & U_2
        \end{pmatrix},\quad
  Q_2 = \begin{pmatrix}
         V_1 &     0 &     0 &     0 &   0 \\
           0 & U_2^* &     0 &     0 &   0 \\
           0 &     0 & U_1^* &     0 &   0 \\
           0 &     0 &     0 & U_2^* &   0  \\
           0 &     0 &     0 &     0 & V_2
        \end{pmatrix}
        Q_3, \\
  Q_3 = \begin{pmatrix}
         I_n & 0 & 0 & 0 & 0 \\
           0 & \begin{bmatrix}1\\0\\ \vdots \\0\end{bmatrix} & 0 & 0 & 0 \\
           0 & 0 & I_n & 0 & 0 \\
           0 & 0 & 0 & \begin{bmatrix}0\\ I_n \end{bmatrix} & 0  \\
          0 & 0 & 0 & 0 & I_{n+1}
        \end{pmatrix}Q_4, \\
  Q_4 = \begin{pmatrix}
         I_n &      0 & (-1)^{n+1}C_n^* & 0 & 0 & 0 \\
           0 & \sqrt2 &             0 & 0 & 0 & 0 \\
         I_n &      0 &   (-1)^{n}C_n^* & 0 & 0 & 0 \\
         0 & 0 & 0 & I_n &      0 &  (-1)^{n+1}C_n^* \\
         0 & 0 & 0 &   0 & \sqrt2 &              0 \\
         0 & 0 & 0 & I_n &      0 &    (-1)^{n}C_n^*
        \end{pmatrix}.
 \end{gather*}
\end{theorem}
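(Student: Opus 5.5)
The plan is to follow the proof of Lemma \ref{lem:fifth} step by step, replacing $5$ by $2n+1$ and tracking where the parity of $n$ enters. We rewrite \eqref{2n+1theq3} as $(A:B)\bigl(C_{2n+1}\oplus(-C_{2n+1})\bigr)(A:B)^*=0$ with $\rank(A:B)=2n+1$. Since $C_{2n+1}$ is real with $C_{2n+1}^2=I$ for odd order, it is Hermitian with eigenvalues $\pm1$. We split each row of $(A:B)$ into its $\pm1$ eigencomponents and obtain $X_1X_1^*=X_{-1}X_{-1}^*$. The singular value decomposition argument of the lemma is dimension independent, so after row operations it gives the general form
\[
(A:B)=(V_{11}^*+WV_{12}^*:V_{21}^*+WV_{22}^*)
\]
with $W$ a $(2n+1)\times(2n+1)$ unitary matrix. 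The columns of $V$ are explicit eigenvectors of $C_{2n+1}\oplus(-C_{2n+1})$.

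To build $V$ explicitly, we note that $C_{2n+1}$ is antidiagonal with signs $(-1)^r$. Its middle entry is $(-1)^{n+1}$, so the middle coordinate $e_{n+1}$ is an eigenvector with eigenvalue $(-1)^{n+1}$. This is exactly where the parity splits the two theorems:
\begin{itemize}
\item for $n$ even (as in the fifth-order case) $e_{n+1}$ lies in the $-1$ eigenspace of $C_{2n+1}$;
\item for $n$ odd it lies in the $+1$ eigenspace.
\end{itemize}
The remaining eigenvectors pair the first $n$ coordinates with the last $n$ as $\frac1{\sqrt2}\begin{pmatrix}I_n\\0\\ \pm(-1)^{n}C_n^*\end{pmatrix}$, with appropriate signs. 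This explains the blocks $(-1)^{n}C_n^*$ and $(-1)^{n+1}C_n^*$ in $Q_4$.

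With these choices, $A$ and $B$ factor as in \eqref{eq:factor1}: each is a block matrix in the entries of $W$ times the block of $Q_4$. For $n$ odd the unpaired coordinate moves to the other side, which changes the partition of $W$ from $(n,1,n)$ to a $(n+1)+n$ block structure adapted to the eigenspace sizes. This gives $\rank A=(n+1)+\rank(\text{an }n\times n\text{ block of }W)$ and $\rank B=n+\rank(\text{an }(n+1)\times n$ or $n\times(n+1)$ block$)$, mirroring \eqref{eq:rankA5}--\eqref{eq:rankB5}.

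Next we apply the CS-decomposition \cite[Theorem 2.7.1]{horn12}, \cite[Theorem 4.1, Corollary 3.1]{fuhr18} to $W$ with respect to a partition into blocks of sizes $n$ and $n+1$. The order of the blocks depends on the parity of $n$, which is why $U_1,V_1$ are $n\times n$ for odd $n$ and $(n+1)\times(n+1)$ for even $n$. This writes
\[
W=(U_1\oplus U_2)\begin{pmatrix}\mathbf C&\mathbf S\\-\mathbf S^*&C\end{pmatrix}(V_1\oplus V_2)
\]
or the analogous form with $C$ and $\mathbf C$ exchanged, where $C^2+S^2=I_n$. Substituting and pulling $U_1,U_2$ to the left and $V_1,V_2$ (together with the compensating $U_i^*$ on the identity blocks) to the right, as in the last display chain of the lemma, produces (b) with the stated $Q_1,\dots,Q_4$.

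For (c), we compute $\rank(A)$ and $\rank(B)$ via $\rank M=\rank MM^*=\rank M^*M$. Using $\mathbf S^*\mathbf S=I-\mathbf C^2$ (respectively $\mathbf S\mathbf S^*=I-C^2$), we obtain
\[
\rank A=(n+1)+\rank(I_n-KK^*),\qquad \rank B=n+\rank(I_{n+1}-K^*K),
\]
possibly with the roles of $A$ and $B$ interchanged. Here $K$ is the indicated $n\times(n+1)$ compression of $U_i\mathbf C$. Since $KK^*$ and $K^*K$ share nonzero eigenvalues with multiplicity, we have $\Null(I_n-KK^*)=\Null(I_{n+1}-K^*K)$. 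Rank–nullity then gives $\rank A=2n+1-\Null(I_n-KK^*)=\rank B$.

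Part (d) then follows from the definitions of mixed and coupled, extended from Definition \ref{typesdef1} as $\rank A=n+1+r$ with $r=n$ meaning coupled and $r<n$ meaning mixed. It also shows $\rank A\ge n+1$, so fully separated conditions are impossible. The main obstacle is the bookkeeping in the middle step: choosing the sign conventions of the eigenvectors so that the $(-1)^nC_n^*$ blocks come out as stated, and correctly relocating the middle coordinate, which determines which block of $W$ controls $\rank A$. I would first verify the construction for $n=1$ and $n=2$, the latter against Lemma \ref{lem:fifth}, before writing the general index computation.
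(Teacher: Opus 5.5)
Your proposal is correct and follows the paper's proof essentially step for step: the same eigenbasis $V$ with $e_{n+1}$ in the $-1$ eigenspace of $C_{2n+1}$ for even $n$, the same SVD reduction to $(V_{11}^*+WV_{12}^*:V_{21}^*+WV_{22}^*)$, the same CS-decomposition of $W$ with respect to the $n+(n+1)$ partition, and the same rank computation via $K=\begin{pmatrix}0&I_n\end{pmatrix}U_2\mathbf{C}$ and $\Null(I_n-KK^*)=\Null(I_{n+1}-K^*K)$. Fix two bookkeeping slips when writing it out. First, the block governing $\rank B$ is the $(n+1)\times(n+1)$ block $\begin{pmatrix}\mathbf{w}_1&W_2\\ w_3&\mathbf{w}_4^T\end{pmatrix}$, not an $(n+1)\times n$ block; it straddles the CS partition, which is exactly why the extra row $\begin{pmatrix}1&0\end{pmatrix}U_2\mathbf{C}$ appears in the Gram matrix. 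Second, for even $n$ the form in (b) and the definition of $K$ force $U_1,V_1$ to be $n\times n$ and $U_2,V_2$ to be $(n+1)\times(n+1)$, as in Lemma \ref{lem:fifth} and the paper's proof, so the sizes you quote (taken from the statement's wording, which is itself inconsistent with (b)) are swapped.
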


\section{Proof of the main theorems}

\begin{proof}[Proof of Theorem \ref{thm:2n+1th-odd}]
Equation \eqref{2n+1theq3} can be written in the form
\begin{equation}
 \label{2n+1eqdsum-odd}
 \rank(A:B) = 2n+1, \qquad
 (A:B)\begin{pmatrix}C_{2n+1} & 0 \\ 0 & -C_{2n+1}\end{pmatrix}(A:B)^* = 0,
\end{equation}
where $\begin{pmatrix}C_{2n+1} & 0 \\ 0 & -C_{2n+1}\end{pmatrix}$ is a Hermitian
matrix with eigenvalues $1$ and $-1$. Thus, each column vector $\mathbf{x}_j^*$ of
$(A:B)^*$ may be written in the form
\begin{equation}
 \label{eq:imigen-odd}
 \mathbf{x}_j^* = \mathbf{x}_{j,1}^* + \mathbf{x}_{j,-1}^*
\end{equation}
where $\mathbf{x}_{j,\pm 1}^*$ belongs to the eigenspace corresponding
to the eigenvalue $\pm 1$. Condition \eqref{2n+1eqdsum-odd} may now be
written
\begin{equation}
 \label{eq:eqinpgen-odd}
 \mathbf{x}_{j,1}\mathbf{x}_{k,1}^* = \mathbf{x}_{j,-1}\mathbf{x}_{k,-1}^*.
\end{equation}
Taking $\mathbf{x}_{j,1}$ as the rows of $X_1$ and similarly for $X_{-1}$,
\eqref{eq:imigen-odd} may be summarized as $(A:B)=X_1+X_{-1}$ and \eqref{eq:eqinpgen-odd}
as 
\begin{equation}
 \label{eq:eqinpgensum-odd}
 X_1X_1^*=X_{-1}X_{-1}^*.
\end{equation}
Now let $V$ be a matrix that diagonalizes $C_{2n+1}\oplus(-C_{2n+1})$ as follows:
\begin{equation}
 \label{eq:decompigen-odd}
 \begin{pmatrix}C_{2n+1} & 0 \\ 0 & -C_{2n+1}\end{pmatrix}
 =
 V\begin{pmatrix}-I_{2n+1} & 0 \\ 0 & I_{2n+1}\end{pmatrix}V^*.
\end{equation}
From the ordering of eigenvectors (i.e. the columns of $V$) in \eqref{eq:decompigen-odd} and
the solution \eqref{eq:imigen-odd} given in terms of eigenvectors, the matrix $V$
may be chosen so that $(A:B)$ has the form
\begin{equation}
 \label{eq:ABcanon-odd}
 (A:B) = (C : D)V^*.
\end{equation}
In particular, we will choose
\begin{equation}
 \label{eq:Vsimp2n+1-odd}
 V =
 \begin{pmatrix}
   V_{-1} & 0              & 0  & V_1 & \mathbf{v}_{1}  & 0 \\
   0      & \mathbf{v}_{1} & V_1& 0   & 0              & V_{-1}
 \end{pmatrix},
\end{equation}
where
\begin{equation}
 \label{eq:V1}%
 V_{1} =
 \frac1{\sqrt2}
 \begin{pmatrix} I_n \\ 0 \\ (-1)^{n+1}C_n \end{pmatrix},
 \qquad
 \mathbf{v}_{1} =
  \underbrace{
   \begin{pmatrix}
   0 &
   \cdots &
   0 &
   1 &
   0 &
   \cdots &
   0
   \end{pmatrix}^T
   \!\!\!
  }_{\text{$(n+1)$-th coordinate}}
\end{equation}
\begin{equation}
 \label{eq:V-1}%
 V_{-1} =
 \frac1{\sqrt2}
 \begin{pmatrix} I_n \\ 0 \\ (-1)^nC_n \end{pmatrix},
\end{equation}
and
\begin{equation*}
 C_{n}=\left((-1)^r\delta_{r,n+1-s}\right)_{r,s=1}^{n}.
\end{equation*}
Writing
\begin{equation*}
 V =
 \begin{pmatrix}
  \mathbf{y}_{1,-1}^* &
  \cdots &
  \mathbf{y}_{2n+1,-1}^* &
  \mathbf{y}_{1,1}^* &
  \cdots &
  \mathbf{y}_{2n+1,1}^*
 \end{pmatrix}%,\qquad
 %V^* =
 %\begin{pmatrix}
 % \mathbf{y}_{1,-1} \\
 % \vdots \\
 % \mathbf{y}_{2n+1,-1} \\
 % \mathbf{y}_{1,1} \\
 % \vdots \\
 % \mathbf{y}_{2n+1,1} \\
 %\end{pmatrix},
\end{equation*}
where $V$ is unitary and each $\mathbf{y}_{j,\pm 1}$ is an eigenvector corresponding
to the eigenvalue $\pm 1$. Since each row of $X_{\pm 1}$ is a linear combination
of $\mathbf{y}_{j,\pm1}$,
\begin{equation*}
 X_1
 =
 C
 \begin{pmatrix}
  \mathbf{y}_{1,1} \\
  \vdots \\
  \mathbf{y}_{2n+1,1}
 \end{pmatrix},\qquad
 X_{-1}
 =
 D
 \begin{pmatrix}
  \mathbf{y}_{1,-1} \\
  \vdots \\
  \mathbf{y}_{2n+1,-1}
 \end{pmatrix}
\end{equation*}
(for some $(2n+1)\times(2n+1)$ matrices $C$ and $D$)
so that \eqref{eq:eqinpgensum-odd} becomes
\begin{equation*}
 CC^*=DD^*.
\end{equation*}
The singular value decompositions $C=U_C\Sigma_C V_C^*$ and $D=U_D\Sigma_D V_D^*$ yields from
\begin{equation*}
 \sigma_C^2=(U_C^*U_D)\Sigma_D^2(U_C^*U_D)^*
\end{equation*}
and by uniqueness of positive definite square roots,
\begin{equation*}
 \Sigma_C=(U_C^*U_D)\Sigma_D(U_C^*U_D)^*.
\end{equation*}
Hence,
\begin{align*}
 (A:B)&=(U_C\Sigma_C:U_D\Sigma_D)\begin{pmatrix}V_C& 0\\ 0 & V_D\end{pmatrix}^*V^* \\
      &=U_C(\Sigma_C:U_C^*U_D\Sigma_D)\begin{pmatrix}V_C& 0\\ 0 & V_D\end{pmatrix}^*V^* \\
      &=U_C(\Sigma_C:\Sigma_C)\begin{pmatrix}V_C& 0\\ 0 & V_D(U_C^*U_D)^*\end{pmatrix}^*V^* \\
      &=U_C\Sigma_C(I_{2n+1}:I_{2n+1})\begin{pmatrix}V_C& 0\\ 0 & V_D(U_C^*U_D)^*\end{pmatrix}^*V^*
\end{align*}
yields the solution \eqref{eq:imigen-odd}  and satisfies \eqref{eq:eqinpgen-odd}.
Since $\rank(A:B)=2n+1$, we have $\rank(\Sigma_C)=2n+1$ and hence $\Sigma_C$ is invertible.
By invariance of the boundary conditions under elementary row operations, we obtain
the general form
\begin{equation}
 \label{eq:gencanon2n+1-odd}
 (A:B)=(I_{2n+1}:I_{2n+1})\begin{pmatrix}V_X& 0\\ 0 & V_Y\end{pmatrix}^*V^*
\end{equation}
where $V_X$ and $V_Y$ are arbitrary unitary matrices. Here, the first
$2n+1$ columns of $V$ are eigenvectors corresponding to the eigenvalue $-1$ of $C_{2n+1}\oplus(-C_{2n+1})$,
and the remaining $2n+1$ columns correspond to the eigenvalue $1$.
We write $V$ as the block matrix
\begin{equation*}
 V = \begin{pmatrix} V_{11} & V_{12} \\ V_{21} & V_{22} \end{pmatrix}
\end{equation*}
so that \eqref{eq:gencanon2n+1-odd} becomes
\begin{equation*}
 (A:B) = (V_X^*V_{11}^*+V_Y^*V_{12}^* : V_X^*V_{21}^* + V_Y^*V_{22}^*)
\end{equation*}
where
\begin{equation*}
 \begin{pmatrix}C_{2n+1} & 0 \\ 0 & -C_{2n+1}\end{pmatrix}
 \begin{pmatrix}V_{11} \\ V_{21}\end{pmatrix}
 =
 -\begin{pmatrix}V_{11} \\ V_{21}\end{pmatrix},
 \qquad
 \begin{pmatrix}C_{2n+1} & 0 \\ 0 & -C_{2n+1}\end{pmatrix}
 \begin{pmatrix}V_{12} \\ V_{22}\end{pmatrix}
 =
  \begin{pmatrix}V_{12} \\ V_{22}\end{pmatrix}.
\end{equation*}
Again, since the boundary conditions are invariant under row operations, we will assume
\begin{equation}
 \label{eq:finalcanon-odd}
 (A:B) = (V_{11}^*+WV_{12}^* : V_{21}^* + WV_{22}^*)
\end{equation}
where $W=V_XV_Y^*$ is unitary.
From \eqref{eq:Vsimp2n+1-odd}, we have
\begin{align*}
% \label{eq:vblock2n+1-odd}
  V_{11} &= \frac1{\sqrt2}\begin{pmatrix} I_n & 0 & 0_n \\ 0 & 0 & 0 \\ (-1)^{n}C_n & 0 & 0_n \end{pmatrix},\\
  V_{12} &= \frac1{\sqrt2}\begin{pmatrix} I_n & 0 & 0_n \\ 0 & \sqrt2 & 0 \\ (-1)^{n+1}C_n & 0 & 0_n \end{pmatrix},\\
  V_{21} &= \frac1{\sqrt2}\begin{pmatrix} 0_n & 0 & I_n \\ 0 & \sqrt2 & 0 \\ 0_n & 0 & (-1)^{n+1}C_n \end{pmatrix},\\
  V_{22} &= \frac1{\sqrt2}\begin{pmatrix} 0_n & 0 & I_n \\ 0 & 0 & 0 \\ 0_n & 0 & (-1)^nC_n \end{pmatrix}.
\end{align*}
Choosing appropriate $W$ provides the remaining canonical forms. Thus
\begin{align*} 
A &     = \frac1{\sqrt2}\left[
       \begin{pmatrix}
        I_n & 0            & (-1)^nC_n^* \\
          0 & 0            & 0  \\
        0_n & 0            & 0_n
       \end{pmatrix}
       +
       W
       \begin{pmatrix}
        I_n & 0            & (-1)^{n+1}C_n^* \\
          0 & \sqrt2       & 0  \\
        0_n & 0            & 0_n
       \end{pmatrix}
      \right], \\
B &    = \frac1{\sqrt2}\left[
       \begin{pmatrix}
        0_n & 0            & 0_n \\
          0 & \sqrt2       & 0 \\
        I_n & 0            & (-1)^{n+1}C_n^*
       \end{pmatrix}
       +
       W
       \begin{pmatrix}
        0_n & 0            & 0_n \\
          0 & 0            & 0 \\
        I_n & 0            & (-1)^nC_n^*
       \end{pmatrix}
      \right],
\end{align*}
Let
\begin{equation*}
 W = \begin{pmatrix} W_1 & \mathbf{w}_1 & W_2 \\ \mathbf{w}_2^T & w_3 & \mathbf{w}_4^T \\ W_3 & \mathbf{w}_5 & W_4 \end{pmatrix}
\end{equation*}
where $W_1, W_2,W_3,W_4 \in M_n(\mathbb{C})$, $\mathbf{w}_1, \mathbf{w}_2, \mathbf{w}_4, \mathbf{w}_5 \in\mathbb{C}^n$ and $w_3\in\mathbb{C}$.
It follows that
\begin{equation} 
\label{eq:factor1:2n+1-odd}
\begin{split}
A &   = \frac1{\sqrt2}
       \begin{pmatrix}
        I_n+W_1        & \sqrt2\mathbf{w}_1 & (-1)^nC_n^*+(-1)^{n+1}W_1C_n^* \\
        \mathbf{w}_2^T & \sqrt2 w_3         & (-1)^{n+1}\mathbf{w}_2^TC_n^* \\
        W_3            & \sqrt2\mathbf{w}_5 & (-1)^{n+1}W_3C_n^*
       \end{pmatrix} \\
  &   = \frac1{\sqrt2}
        \begin{pmatrix} W_1 & \mathbf{w}_1 & I_n \\ \mathbf{w}_2^T & w_3 & 0 \\ W_3 & \mathbf{w}_5 & 0 \end{pmatrix}
        \begin{pmatrix}
         I_n &      0 & (-1)^{n+1}C_n^* \\
           0 & \sqrt2 &             0 \\
         I_n &        0 &   (-1)^{n}C_n^* \\
        \end{pmatrix}, \\
B &    = \frac1{\sqrt2}
       \begin{pmatrix}
                   W_2 & 0      & (-1)^{n}W_2C_n^* \\
        \mathbf{w}_4^T & \sqrt2 & (-1)^{n}\mathbf{w}_4^TC_n^*  \\
               I_n+W_4 & 0      & (-1)^{n+1}C_n^*+(-1)^{n}W_4C_n^*
       \end{pmatrix} \\
  &   = \frac1{\sqrt2}
        \begin{pmatrix} 0 & 0 & W_2 \\ 0 & 1 & \mathbf{w}_4^T \\ I_n & 0 & W_4 \end{pmatrix}
        \begin{pmatrix}
         I_n &      0 & (-1)^{n+1}C_n^* \\
           0 & \sqrt2 &             0 \\
         I_n &      0 &   (-1)^{n}C_n^* \\
        \end{pmatrix},
\end{split}
\end{equation}
and hence,
\begin{align}
 \rank(A) &= n+\rank\begin{pmatrix}\mathbf{w}_2^T & w_3 \\ W_3 & \mathbf{w}_5 \end{pmatrix}, \label{eq:rankA:2n+1-odd} \\
 \rank(B) &= n+1+\rank(W_2). \label{eq:rankB:2n+1-odd}
\end{align}
Again we use the CS-decomposition. %described in detail in \cite{paige94} and \cite[Theorem 2.7.1]{horn12},
%provides a useful way to speak about rank.
In particular, \cite[Theorem 4.1]{fuhr18}\footnote{using the embedding $U\mapsto\begin{pmatrix}U&0\\0&1\end{pmatrix}$} yields
\begin{equation*}
 W = \begin{pmatrix} G_1 & 0 \\ 0 & G_2 \end{pmatrix}
     \begin{pmatrix} (I_n+F)/2 & 0 & (I_n-F)/2 \\ 0 & 1 & 0 \\ (I_n-F)/2 & 0 & (I_n+F)/2 \end{pmatrix}
     \begin{pmatrix} G_3 & 0 \\ 0 & G_4 \end{pmatrix}
\end{equation*}
for some $(n+1)\times(n+1)$ unitary matrices $G_1$, $G_3$, and $n\times n$ unitary matrices $F$, $G_2$ and $G_4$.
Since
\begin{equation*}
 \begin{pmatrix} (I_n+F)/2 & (I_n-F)/2 \\ (I_n-F)/2 & (I_n+F)/2 \end{pmatrix}
\end{equation*}
is a $2n\times 2n$ unitary matrix, $W$ may be expressed in the following form, using \cite[Corollary 3.1]{fuhr18},
\begin{equation*}
 W = \begin{pmatrix} U_1 & 0 \\ 0 & U_2 \end{pmatrix}
     \begin{pmatrix} C & 0 & S \\ 0 & 1 & 0 \\ -S & 0 & C \end{pmatrix}
     \begin{pmatrix} V_1 & 0 \\ 0 & V_2 \end{pmatrix}
\end{equation*}
where $U_1$ and $V_1$ are $(n+1)\times(n+1)$ unitary matrices, $U_2$ and $V_2$ are $n\times n$ unitary matrices,
and $C$ and $S$ are positive semi-definite diagonal matrices satisfying $C^2+S^2=I_n$.
Consequently,
\begin{equation*}
 W = \begin{pmatrix}
        U_1\begin{bmatrix} C & 0 \\ 0 &  1\end{bmatrix}V_1 & U_1\begin{bmatrix}S \\ 0\end{bmatrix}V_2 \\[8pt]
       -U_2\begin{bmatrix} S & 0\end{bmatrix}V_1 & U_2CV_2
      \end{pmatrix}
\end{equation*}
so that, using $\mathbf{S}=\begin{bmatrix}S \\ 0\end{bmatrix}$ and $\mathbf{C}=C\oplus 1=\begin{bmatrix}C&0\\0&1\end{bmatrix}$,
\begin{equation}
 \label{eq:W:2n+1-odd}
 \begin{pmatrix} W_1 & \mathbf{w}_1 & W_2 \\ \mathbf{w}_2^T & w_3 & \mathbf{w}_4^T \\ W_3 & \mathbf{w}_5 & W_4 \end{pmatrix}
 =
 \begin{pmatrix}
  U_1\mathbf{C}V_1 & U_1\mathbf{S}V_2 \\
  -U_2\mathbf{S}^*V_1 & U_2CV_2
 \end{pmatrix}
\end{equation}
It follows from \eqref{eq:factor1:2n+1-odd} that
\begin{equation*}
 (A:B) =
 \dfrac{1}{\sqrt{2}}
 \begin{pmatrix}
  W_1 & \mathbf{w}_1 & I_n
  & 0_n & 0 & W_2 \\
  \mathbf{w}_2^T & w_3 & 0
  & 0 & 1 & \mathbf{w}_4^T \\
  W_3 & \mathbf{w}_5 & 0_n
  & I_n & 0 & W_4
 \end{pmatrix}Q_4
\end{equation*}
where
\begin{equation*}
 Q_4 = 
 \begin{pmatrix}
  I_n &      0    & (-1)^{n+1}C_n^* & 0 & 0 & 0 \\
    0 & \sqrt2    &               0 & 0 & 0 & 0 \\
  I_n &      0    &   (-1)^{n}C_n^* & 0 & 0 & 0 \\
  0 & 0 & 0 & I_n &               0 &(-1)^{n+1}C_n^* \\
  0 & 0 & 0 &   0 &          \sqrt2 &             0 \\
  0 & 0 & 0 & I_n &               0 &  (-1)^{n}C_n^*
 \end{pmatrix}.
\end{equation*}
The decomposition of $(A:B)$ now follows from
\begin{align*}
 (A:B) &=
 \dfrac{1}{\sqrt{2}}
 \begin{pmatrix}
  U_1\mathbf{C}V_1 & \begin{bmatrix} I_n & 0_n & 0\\ 0 & 0 & 1\end{bmatrix} & U_1\mathbf{S}V_2 \\[10pt]
  -U_2\mathbf{S}^*V_1 & \begin{bmatrix} 0_n & I_n & 0 \end{bmatrix} & U_2CV_2
 \end{pmatrix}Q_4 \\
 &=
 \dfrac{1}{\sqrt{2}}
 \begin{pmatrix}
  U_1 & 0 \\
    0 & U_2
 \end{pmatrix}
 \begin{pmatrix}
  \mathbf{C} & U_1^*\begin{bmatrix} I_n & 0_n & 0\\ 0 & 0 & 1\end{bmatrix} & \mathbf{S} \\[10pt]
  -\mathbf{S}^* & \begin{bmatrix} 0_n & U_2^* & 0 \end{bmatrix} & C
 \end{pmatrix}
 \begin{pmatrix}
  V_1 & 0 & 0 \\
    0 & I_{2n+1} & 0  \\
  0 & 0 & V_2
 \end{pmatrix}Q_4 \\
 &=
 \dfrac{1}{\sqrt{2}}
 \begin{pmatrix}
  U_1 & 0 \\
    0 & U_2
 \end{pmatrix}
 \begin{pmatrix}
  \mathbf{C} & I_{n+1} & 0 & I_{n+1} & \mathbf{S} \\
  -\mathbf{S}^* & 0 & I_n & 0 & C
 \end{pmatrix} \\
 & \qquad \times
 \begin{pmatrix}
  V_1 & 0 & 0 & 0 & 0 \\
    0 & U_1^*\begin{bmatrix}I_n\\ 0 \end{bmatrix} & 0 & 0 & 0 \\
    0 & 0 & U_2^* & 0 & 0  \\
    0 & 0 & 0 & U_1^*\begin{bmatrix}0\\\vdots\\0\\1\end{bmatrix} & 0 \\
  0 & 0 & 0 & 0 & V_2
 \end{pmatrix} Q_4 \\
 &=
 \dfrac{1}{\sqrt{2}}
 \begin{pmatrix}
  U_1 & 0 \\
    0 & U_2
 \end{pmatrix}
 \begin{pmatrix}
  \mathbf{C} & I_{n+1} & 0 & I_{n+1} & \mathbf{S} \\
  -\mathbf{S}^* & 0 & I_n & 0 & C
 \end{pmatrix}
 \begin{pmatrix}
  V_1 & 0 & 0 & 0 & 0 \\
    0 & U_1^* & 0 & 0 & 0 \\
    0 & 0 & U_2^* & 0 & 0 \\
    0 & 0 & 0 & U_1^* & 0  \\
  0 & 0 & 0 & 0 & V_2
 \end{pmatrix}
 \\
 &\qquad \times
 \begin{pmatrix}
  I_{n+1} & 0 & 0 & 0 & 0 \\
    0 & \begin{bmatrix}I_n \\ 0 \end{bmatrix} & 0 & 0 & 0 \\
    0 & 0 & I_n & 0 & 0 \\
    0 & 0 & 0 & \begin{bmatrix}0\\\vdots \\0\\1\end{bmatrix} & 0  \\
  0 & 0 & 0 & 0 & I_n
 \end{pmatrix} Q_4.
\end{align*}
We note that, by \eqref{eq:W:2n+1-odd},
\begin{align*}
 \begin{pmatrix}\mathbf{w}_2^T & w_3 \\ W_3 & \mathbf{w}_5 \end{pmatrix}
 &=
 \begin{pmatrix}0 & I_{n+1}\end{pmatrix}
 \begin{pmatrix}
  U_1\mathbf{C}V_1 & U_1\mathbf{S}V_2 \\
  -U_2\mathbf{S}^*V_1 & U_2CV_2
 \end{pmatrix}
 \begin{pmatrix}I_{n+1} \\ 0\end{pmatrix} \\
 &=
 \begin{pmatrix}\begin{bmatrix}\mathbf{0}_n^T & 1\end{bmatrix} & \mathbf{0}_n^T \\ 0 & I_{n}\end{pmatrix}
 \begin{pmatrix}
  U_1\mathbf{C}V_1 & U_1\mathbf{S}V_2 \\
  -U_2\mathbf{S}^*V_1 & U_2CV_2
 \end{pmatrix}
 \begin{pmatrix}I_{n+1} \\ 0\end{pmatrix} \\
 &= \begin{pmatrix}
     \begin{bmatrix}\mathbf{0}_n^T&1\end{bmatrix}U_1\mathbf{C}V_1 \\
     -U_2\mathbf{S}^*V_1
    \end{pmatrix},\\
 W_2
 &=
 \begin{pmatrix}\begin{bmatrix}I_n & 0\end{bmatrix} & 0\end{pmatrix}
 \begin{pmatrix}
  U_1\mathbf{C}V_1 & U_1\mathbf{S}V_2 \\
  -U_2\mathbf{S}^*V_1 & U_2CV_2
 \end{pmatrix}
 \begin{pmatrix}0 \\ I_{n}\end{pmatrix} \\
 &= \left(\begin{bmatrix} I_n & 0\end{bmatrix} U_1\mathbf{S}V_2\right),
\end{align*}
where $0$ denotes a zero matrix of appropriate size in each case.
Hence, it follows from \eqref{eq:rankA:2n+1-odd}, \eqref{eq:rankB:2n+1-odd} and \eqref{eq:W:2n+1-odd} that
\begin{align*}
 \rank(A)&=n+\rank\begin{pmatrix}
               \begin{bmatrix}\mathbf{0}_n^T&1\end{bmatrix}U_1\mathbf{C}V_1 \\
               -U_2\mathbf{S}^*V_1
             \end{pmatrix}
          =n+\rank\begin{pmatrix}
               \begin{bmatrix}\mathbf{0}_n^T&1\end{bmatrix}U_1\mathbf{C} \\
               \mathbf{S}^*
                  \end{pmatrix}, \\
 \rank(B)&=n+1+\rank\left(\begin{bmatrix} I_n & 0\end{bmatrix}
                         U_1\mathbf{S}V_2\right)
          =n+1+\rank\left(\begin{bmatrix} I_n & 0 \end{bmatrix}U_1\mathbf{S}\right),
\end{align*}
by invariance of the rank under invertible column operations (in this case: multiplication by $V_1$ and $V_2$ respectively).
By construction $\mathbf{S}\mathbf{S}^*=I_{n+1}-\mathbf{C}\mathbf{C}^*=I_{n+1}-\mathbf{C}^2$,
and since $\rank(X)=\rank(XX^*)=\rank(X^*X)$ for any matrix $X$, we have
\begin{align*}
  \left(\begin{bmatrix} I_n & 0 \end{bmatrix}U_1\mathbf{S}\right)
  \left(\begin{bmatrix} I_n & 0 \end{bmatrix}U_1\mathbf{S}\right)^*
 &= \begin{pmatrix} I_n & 0 \end{pmatrix}
    U_1\mathbf{S}\mathbf{S}^*U_1^*
    \begin{pmatrix} I_n \\ 0 \end{pmatrix}\\
 &= I_n
  - \begin{pmatrix} I_n & 0 \end{pmatrix}
    U_1\mathbf{C}^2U_1^*
    \begin{pmatrix} I_n \\ 0 \end{pmatrix} \\
 \begin{pmatrix} \begin{bmatrix}\mathbf{0}_n^T & 1\end{bmatrix}U_1\mathbf{C} \\ \mathbf{S}^* \end{pmatrix}^*
 \begin{pmatrix} \begin{bmatrix}\mathbf{0}_n^T & 1\end{bmatrix}U_1\mathbf{C} \\ \mathbf{S}^* \end{pmatrix}
  &= \mathbf{S}\mathbf{S}^*
   + \mathbf{C}^*U_1^*
     \begin{pmatrix}0_n&0\\0&1\end{pmatrix}
     U_1\mathbf{C} \\
  &= I_{n+1}-\mathbf{C}^*U_1^*
     \begin{pmatrix}I_n&0\\0&0\end{pmatrix}
     U_1\mathbf{C} \\
  &= I_{n+1}-\mathbf{C}U_1^*
     \begin{pmatrix} I_n \\ 0 \end{pmatrix}
     \begin{pmatrix} I_n & 0 \end{pmatrix}
     U_1\mathbf{C}.
\end{align*}
Hence,
\begin{align*}
 \rank(A)&=n+
  \rank\left(I_{n+1} - K^*K\right), \\
 \rank(B)&=n+1+
  \rank\left(I_n - KK^*\right),
\end{align*}
where
\begin{equation*}
 K=\begin{pmatrix}I_n&0\end{pmatrix}U_1\mathbf{C}.
\end{equation*}
Since $K^*K$ and $KK^*$ have the same non-zero eigenvalues with the
same multiplicities $KK^*$ and $K^*K$ have the same number of
unit eigenvalues (i.e. equal to 1) so $\Null(I_n-KK^*)=\Null(I_{n+1}-K^*K)$.
By the rank-nullity duality
\begin{equation*}
 \rank(A) = 2n+1-\Null(I_{n+1}-K^*K) = 2n+1-\Null(I_n-KK^*) = \rank(B).
 \qedhere
\end{equation*}
\end{proof}

\begin{proof}[Proof of Theorem \ref{thm:2n+1th-even}]
Following the proof of Theorem \ref{thm:2n+1th-odd}, we
now express $V$ which obeys
\begin{equation}
 \label{eq:decompigen-even}
 \begin{pmatrix}C_{2n+1} & 0 \\ 0 & -C_{2n+1}\end{pmatrix}
 =
 V\begin{pmatrix}-I_{2n+1} & 0 \\ 0 & I_{2n+1}\end{pmatrix}V^*.
\end{equation}
In particular, we will choose
\begin{equation}
 \label{eq:Vsimp2n+1-even}
 V =
 \begin{pmatrix}
   V_{-1} & \mathbf{v}_{-1} & 0  & V_1 & 0               & 0 \\
   0      & 0               & V_1& 0   & \mathbf{v}_{-1} & V_{-1}
 \end{pmatrix},
\end{equation}
where $V_1$ and $V_{-1}$ are given in \eqref{eq:V1} and \eqref{eq:V-1} and
\begin{equation*}
 \mathbf{v}_{-1} =
  \underbrace{
   \begin{pmatrix}
   0 &
   \cdots &
   0 &
   1 &
   0 &
   \cdots &
   0
   \end{pmatrix}^T
   \!\!\!
  }_{\text{$(n+1)$-th coordinate}}
\end{equation*}
so that
\begin{align*}
% \label{eq:vblock2n+1-even}
  V_{11} &= \frac1{\sqrt2}\begin{pmatrix} I_n & 0 & 0_n \\ 0 & \sqrt2 & 0 \\ (-1)^{n}C_n & 0 & 0_n \end{pmatrix},\\
  V_{12} &= \frac1{\sqrt2}\begin{pmatrix} I_n & 0 & 0_n \\ 0 & 0      & 0 \\ (-1)^{n+1}C_n & 0 & 0_n \end{pmatrix},\\
  V_{21} &= \frac1{\sqrt2}\begin{pmatrix} 0_n & 0 & I_n \\ 0 & 0      & 0 \\ 0_n & 0 & (-1)^{n+1}C_n \end{pmatrix},\\
  V_{22} &= \frac1{\sqrt2}\begin{pmatrix} 0_n & 0 & I_n \\ 0 & \sqrt2 & 0 \\ 0_n & 0 & (-1)^nC_n \end{pmatrix}.
\end{align*}
Choosing appropriate $W$ provides the remaining canonical forms. Thus
\begin{align*} 
A &     = \frac1{\sqrt2}\left[
       \begin{pmatrix}
        I_n & 0            & (-1)^nC_n^* \\
          0 & \sqrt2       & 0  \\
        0_n & 0            & 0_n
       \end{pmatrix}
       +
       W
       \begin{pmatrix}
        I_n & 0            & (-1)^{n+1}C_n^* \\
          0 & 0            & 0  \\
        0_n & 0            & 0_n
       \end{pmatrix}
      \right], \\
B &    = \frac1{\sqrt2}\left[
       \begin{pmatrix}
        0_n & 0            & 0_n \\
          0 & 0            & 0 \\
        I_n & 0            & (-1)^{n+1}C_n^*
       \end{pmatrix}
       +
       W
       \begin{pmatrix}
        0_n & 0            & 0_n \\
          0 & \sqrt2       & 0 \\
        I_n & 0            & (-1)^nC_n^*
       \end{pmatrix}
      \right],
\end{align*}
Let
\begin{equation*}
 W = \begin{pmatrix} W_1 & \mathbf{w}_1 & W_2 \\ \mathbf{w}_2^T & w_3 & \mathbf{w}_4^T \\ W_3 & \mathbf{w}_5 & W_4 \end{pmatrix}
\end{equation*}
where $W_1, W_2,W_3,W_4 \in M_n(\mathbb{C})$, $\mathbf{w}_1, \mathbf{w}_2, \mathbf{w}_4, \mathbf{w}_5 \in\mathbb{C}^n$ and $w_3\in\mathbb{C}$.
It follows that
\begin{equation} 
\label{eq:factor1:2n+1-even}
\begin{split}
A &   = \frac1{\sqrt2}
       \begin{pmatrix}
        I_n+W_1        & 0      & (-1)^nC_n^*+(-1)^{n+1}W_1C_n^* \\
        \mathbf{w}_2^T & \sqrt2 & (-1)^{n+1}\mathbf{w}_2^TC_n^* \\
        W_3            & 0      & (-1)^{n+1}W_3C_n^*
       \end{pmatrix} \\
  &   = \frac1{\sqrt2}
        \begin{pmatrix} W_1 & 0 & I_n \\ \mathbf{w}_2^T & 1 & 0 \\ W_3 & 0 & 0 \end{pmatrix}
        \begin{pmatrix}
         I_n &      0 & (-1)^{n+1}C_n^* \\
           0 & \sqrt2 &             0 \\
         I_n &      0 &   (-1)^{n}C_n^* \\
        \end{pmatrix}, \\
B &    = \frac1{\sqrt2}
       \begin{pmatrix}
                   W_2 & \sqrt2\mathbf{w}_1 & (-1)^{n}W_2C_n^* \\
        \mathbf{w}_4^T & \sqrt2 w_3         & (-1)^{n}\mathbf{w}_4^TC_n^*  \\
               I_n+W_4 & \sqrt2\mathbf{w}_5 & (-1)^{n+1}C_n^*+(-1)^{n}W_4C_n^*
       \end{pmatrix} \\
  &   = \frac1{\sqrt2}
        \begin{pmatrix} 0 & \mathbf{w}_1 & W_2 \\ 0 & w_3 & \mathbf{w}_4^T \\ I_n & \mathbf{w}_5 & W_4 \end{pmatrix}
        \begin{pmatrix}
         I_n &      0 & (-1)^{n+1}C_n^* \\
           0 & \sqrt2 &             0 \\
         I_n &      0 &   (-1)^{n}C_n^* \\
        \end{pmatrix},
\end{split}
\end{equation}
and hence,
\begin{align}
 \rank(A) &= n+1+\rank(W_3), \label{eq:rankA:2n+1-even} \\
 \rank(B) &= n+\rank\begin{pmatrix}\mathbf{w}_1 & W_2 \\ w_3 & \mathbf{w}_4^T \end{pmatrix}, \label{eq:rankB:2n+1-even}
\end{align}
Again we use the CS-decomposition. %described in detail in \cite{paige94} and \cite[Theorem 2.7.1]{horn12},
%provides a useful way to speak about rank.
In particular, \cite[Theorem 4.1]{fuhr18} yields
\begin{equation*}
 W = \begin{pmatrix} G_1 & 0 \\ 0 & G_2 \end{pmatrix}
     \begin{pmatrix} (I_n+F)/2 & 0 & (I_n-F)/2 \\ 0 & 1 & 0 \\ (I_n-F)/2 & 0 & (I_n+F)/2 \end{pmatrix}
     \begin{pmatrix} G_3 & 0 \\ 0 & G_4 \end{pmatrix}
\end{equation*}
for some $n\times n$ unitary matrices $F$, $G_1$, $G_3$, and $(n+1)\times(n+1)$ unitary matrices $G_2$ and $G_4$.
Since
\begin{equation*}
 \begin{pmatrix} (I_n+F)/2 & (I_n-F)/2 \\ (I_n-F)/2 & (I_n+F)/2 \end{pmatrix}
\end{equation*}
is a $2n\times 2n$ unitary matrix, $W$ may be expressed in the following form, using \cite[Corollary 3.1]{fuhr18},
\begin{equation*}
 W = \begin{pmatrix} U_1 & 0 \\ 0 & U_2 \end{pmatrix}
     \begin{pmatrix} C & 0 & S \\ 0 & 1 & 0 \\ -S & 0 & C \end{pmatrix}
     \begin{pmatrix} V_1 & 0 \\ 0 & V_2 \end{pmatrix}
\end{equation*}
where $U_1$ and $V_1$ are $n\times n$ unitary matrices, $U_2$ and $V_2$ are $(n+1)\times (n+1)$ unitary matrices,
and $C$ and $S$ are positive semi-definite diagonal matrices satisfying $C^2+S^2=I_n$.
Consequently,
\begin{equation*}
 W = \begin{pmatrix}
       U_1CV_1 & U_1\begin{bmatrix}0 & S\end{bmatrix}V_2 \\
       U_2\begin{bmatrix} 0 \\ -S\end{bmatrix}V_1 & U_2\begin{bmatrix} 1 & 0 \\ 0 &  C\end{bmatrix}V_2
      \end{pmatrix}
\end{equation*}
so that, using $\mathbf{S}=\begin{bmatrix}0&S\end{bmatrix}$ and $\mathbf{C}=1\oplus C=\begin{bmatrix}1&0\\0&C\end{bmatrix}$,
\begin{equation}
 \label{eq:W:2n+1-even}
 \begin{pmatrix} W_1 & \mathbf{w}_1 & W_2 \\ \mathbf{w}_2^T & w_3 & \mathbf{w}_4^T \\ W_3 & \mathbf{w}_5 & W_4 \end{pmatrix}
 =
 \begin{pmatrix}
  U_1CV_1 & U_1\mathbf{S}V_2 \\
  -U_2\mathbf{S}^*V_1 & U_2\mathbf{C}V_2
 \end{pmatrix}
\end{equation}
Hence, \eqref{eq:factor1:2n+1-even} gives
\begin{equation*}
 (A:B) =
 \dfrac{1}{\sqrt{2}}
 \begin{pmatrix}
  W_1 & 0 & I_n
  & 0 & \mathbf{w}_1 & W_2 \\
  \mathbf{w}_2^T & 1 & 0
  & 0 & w_3 & \mathbf{w}_4^T \\
  W_3 & 0 & 0
  & I_n & \mathbf{w}_5 & W_4
 \end{pmatrix}Q_4
\end{equation*}
where
\begin{equation*}
 Q_4 = 
 \begin{pmatrix}
  I_n &      0    & (-1)^{n+1}C_n^* & 0 & 0 & 0 \\
    0 & \sqrt2    &               0 & 0 & 0 & 0 \\
  I_n &      0    &   (-1)^{n}C_n^* & 0 & 0 & 0 \\
  0 & 0 & 0 & I_n &               0 &(-1)^{n+1}C_n^* \\
  0 & 0 & 0 &   0 &          \sqrt2 &             0 \\
  0 & 0 & 0 & I_n &               0 &  (-1)^{n}C_n^*
 \end{pmatrix}.
\end{equation*}
It follows that
\begin{align*}
 (A:B) &=
 \dfrac{1}{\sqrt{2}}
 \begin{pmatrix}
  U_1CV_1 & \begin{bmatrix} 0 & I_n & 0_n\end{bmatrix} & U_1\mathbf{S}V_2 \\[4pt]
  -U_2\mathbf{S}^*V_1 & \begin{bmatrix} 1 & 0 & 0 \\ 0 & 0_n & I_n \end{bmatrix} & U_2\mathbf{C}V_2
 \end{pmatrix}Q_4 \\
 &=
 \dfrac{1}{\sqrt{2}}
 \begin{pmatrix}
  U_1 & 0 \\
    0 & U_2
 \end{pmatrix}
 \begin{pmatrix}
  C & \begin{bmatrix} 0 & U_1^* & 0_n\end{bmatrix} & \mathbf{S} \\[4pt]
  -\mathbf{S}^* & U_2^*\begin{bmatrix} 1 & 0 & 0 \\ 0 & 0_n & I_n \end{bmatrix} & \mathbf{C}
 \end{pmatrix}
 \begin{pmatrix}
  V_1 & 0 & 0 \\
    0 & I_{2n+1} & 0  \\
  0 & 0 & V_2
 \end{pmatrix}Q_4 \\
 &=
 \dfrac{1}{\sqrt{2}}
 \begin{pmatrix}
  U_1 & 0 \\
    0 & U_2
 \end{pmatrix}
 \begin{pmatrix}
  C & 0 & I_n & 0 & \mathbf{S} \\
  -\mathbf{S}^* & I_{n+1} & 0 & I_{n+1} & \mathbf{C}
 \end{pmatrix} \\
 & \qquad \times
 \begin{pmatrix}
  V_1 & 0 & 0 & 0 & 0 \\
    0 & U_2^*\begin{bmatrix}1\\0\\\vdots\\0\end{bmatrix} & 0 & 0 & 0 \\
    0 & 0 & U_1^* & 0 & 0 \\
    0 & 0 & 0 & U_2^*\begin{bmatrix}0\\ I_n \end{bmatrix} & 0  \\
  0 & 0 & 0 & 0 & V_2
 \end{pmatrix} Q_4 \\
 &=
 \dfrac{1}{\sqrt{2}}
 \begin{pmatrix}
  U_1 & 0 \\
    0 & U_2
 \end{pmatrix}
 \begin{pmatrix}
  C & 0 & I_n & 0 & \mathbf{S} \\
  -\mathbf{S}^* & I_{n+1} & 0 & I_{n+1} & \mathbf{C}
 \end{pmatrix}
 \begin{pmatrix}
  V_1 & 0 & 0 & 0 & 0 \\
    0 & U_2^* & 0 & 0 & 0 \\
    0 & 0 & U_1^* & 0 & 0 \\
    0 & 0 & 0 & U_2^* & 0  \\
  0 & 0 & 0 & 0 & V_2
 \end{pmatrix}
 \\
 &\qquad \times
 \begin{pmatrix}
  I_n & 0 & 0 & 0 & 0 \\
    0 & \begin{bmatrix}1\\0\\ \vdots \\0\end{bmatrix} & 0 & 0 & 0 \\
    0 & 0 & I_n & 0 & 0 \\
    0 & 0 & 0 & \begin{bmatrix}0\\ I_n \end{bmatrix} & 0  \\
  0 & 0 & 0 & 0 & I_{n+1}
 \end{pmatrix} Q_4.
\end{align*}
We note that, by \eqref{eq:W:2n+1-even},
\begin{align*}
 \begin{pmatrix} \mathbf{w}_1 & W_2 \\  w_3 & \mathbf{w}_4^T \end{pmatrix}
 &=
 \begin{pmatrix} I_{n+1} & 0 \end{pmatrix}
 \begin{pmatrix}
  U_1\mathbf{C}V_1 & U_1\mathbf{S}V_2 \\
  -U_2\mathbf{S}^*V_1 & U_2CV_2
 \end{pmatrix}
 \begin{pmatrix} 0 \\ I_{n+1} \end{pmatrix} \\
 &=
 \begin{pmatrix}I_n & 0 \\ \mathbf{0}_n^T & \begin{bmatrix}1 & \mathbf{0}_n^T\end{bmatrix}\end{pmatrix}
 \begin{pmatrix}
  U_1\mathbf{C}V_1 & U_1\mathbf{S}V_2 \\
  -U_2\mathbf{S}^*V_1 & U_2CV_2
 \end{pmatrix}
 \begin{pmatrix} 0 \\ I_{n+1} \end{pmatrix} \\
 &= \begin{pmatrix}
     U_1\mathbf{S}V_2 \\
     \begin{bmatrix}1&\mathbf{0}_n^T\end{bmatrix}U_2\mathbf{C}V_2
    \end{pmatrix}, \\
 W_3
 &=
 \begin{pmatrix}0 & \begin{bmatrix}0 & I_n\end{bmatrix}\end{pmatrix}
 \begin{pmatrix}
  U_1\mathbf{C}V_1 & U_1\mathbf{S}V_2 \\
  -U_2\mathbf{S}^*V_1 & U_2CV_2
 \end{pmatrix}
 \begin{pmatrix}I_{n} \\ 0\end{pmatrix} \\
 &= \left(\begin{bmatrix} 0 & I_n \end{bmatrix} U_2(-\mathbf{S}^*)V_1\right).
\end{align*}
It follows from \eqref{eq:rankA:2n+1-even}, \eqref{eq:rankB:2n+1-even} and \eqref{eq:W:2n+1-even} that
\begin{align*}
 \rank(A)&=n+1+\rank\left(\begin{bmatrix} 0 & I_n \end{bmatrix}
                         U_2(-\mathbf{S}^*)V_1\right)
          =n+1+\rank\left(\begin{bmatrix} 0 & I_n \end{bmatrix}U_2\mathbf{S}^*\right), \\
 \rank(B)&=n+\rank\begin{pmatrix}
               U_1\mathbf{S}V_2 \\
               \begin{bmatrix}1&\mathbf{0}_n^T\end{bmatrix}U_2\mathbf{C}V_2
             \end{pmatrix}
          =n+\rank\begin{pmatrix}
                   \mathbf{S} \\
                   \begin{bmatrix}1&\mathbf{0}_n^T\end{bmatrix}U_2\mathbf{C}
                  \end{pmatrix}.
\end{align*}
By construction $\mathbf{S}^*\mathbf{S}=I_{n+1}-\mathbf{C}^*\mathbf{C}=I_{n+1}-\mathbf{C}^2$,
and since $\rank(X)=\rank(XX^*)=\rank(X^*X)$ for any matrix $X$, we have
\begin{align*}
  \left(\begin{bmatrix} 0 & I_n \end{bmatrix}U_2\mathbf{S}^*\right)
  \left(\begin{bmatrix} 0 & I_n \end{bmatrix}U_2\mathbf{S}^*\right)^*
 &= \begin{pmatrix} 0 & I_n \end{pmatrix}
    U_2\mathbf{S}^*\mathbf{S}U_2^*
    \begin{pmatrix} 0 \\ I_n \end{pmatrix}\\
 &= I_n
  - \begin{pmatrix} 0 & I_n \end{pmatrix}
    U_2\mathbf{C}^2U_2^*
    \begin{pmatrix} 0 \\ I_n \end{pmatrix} \\
 \begin{pmatrix} \mathbf{S} \\ \begin{bmatrix}1&\mathbf{0}_n^T\end{bmatrix}U_2\mathbf{C} \end{pmatrix}^*
 \begin{pmatrix} \mathbf{S} \\ \begin{bmatrix}1&\mathbf{0}_n^T\end{bmatrix}U_2\mathbf{C} \end{pmatrix}
  &= \mathbf{S}^*\mathbf{S}
   + \mathbf{C}^*U_2^*
     \begin{pmatrix}1&0\\0&0_n\end{pmatrix}
     U_2\mathbf{C} \\
  &= I_{n+1}-\mathbf{C}^*U_2^*
     \begin{pmatrix}0&0\\0&I_n\end{pmatrix}
     U_2\mathbf{C} \\
  &= I_{n+1}-\mathbf{C}U_2^*
     \begin{pmatrix} 0 \\ I_n \end{pmatrix}
     \begin{pmatrix} 0 & I_n \end{pmatrix}
     U_2\mathbf{C}.
\end{align*}
Hence,
\begin{align*}
 \rank(A)&=n+1+
  \rank\left(I_n - KK^*\right), \\
 \rank(B)&=n+
  \rank\left(I_{n+1} - K^*K\right)
\end{align*}
where
\begin{equation*}
 K=\begin{pmatrix}0&I_n\end{pmatrix}U_2\mathbf{C}.
\end{equation*}
Since $K^*K$ and $KK^*$ have the same non-zero eigenvalues with the
same multiplicities $KK^*$ and $K^*K$ have the same number of
unit eigenvalues (i.e. equal to 1) so $\Null(I_n-KK^*)=\Null(I_{n+1}-K^*K)$.
By the rank-nullity duality
\begin{equation*}
 \rank(A) = 2n+1-\Null(I_n-KK^*) = 2n+1-\Null(I_{n+1}-K^*K) = \rank(B).
 \qedhere
\end{equation*}
\end{proof}

\section{Conclusion}

This paper provides the remaining canonical forms for odd-order differential operators with self-adjoint boundary conditions.
To summarize, let us present all of the canonical forms:

\begin{theorem*}[Canonical forms for $2n$-th order differential operators {\cite[Theorem 5.1]{hardy24}}]
 \strut\\
 Let $A$ and $B$ be $2n\times 2n$ matrices satisfying
 \begin{equation*}
  \rank(A:B)=2n \quad \textrm{and}\quad AC_{2n}A^*=BC_{2n}B^*.
 \end{equation*}
 Let $Z$ be the matrix
 \begin{equation*}
  Z = \frac1{\sqrt2}
      \begin{pmatrix}
       I_n & I_n & 0   & 0   \\
       I_n &-I_n & 0   & 0   \\
       0 & 0   & I_n & I_n \\
       0 & 0   & I_n &-I_n
      \end{pmatrix}
      \begin{pmatrix}
       I_n & 0   & 0   & 0 \\
       0 &(-1)^{n+1}iC_n & 0   & 0 \\
       0 & 0   & I_n & 0 \\
       0 & 0   & 0   & (-1)^{n+1}iC_n
      \end{pmatrix}.
 \end{equation*}
 Then there exists a $2n\times 2n$ non singular matrix $U$ and
 $n\times n$ unitary matrices $V_1$, $U_1^*$, $U_2^*$ and $V_2$, and
 positive semi-definite diagonal matrices $C$ and $S$ with $C^2+S^2=I_n$,
 such that
 \begin{equation*}
  (A:B) = U
          \begin{pmatrix}
            C & I_n & 0 & S \\
           -S & 0 & I_n & C
          \end{pmatrix}
          \begin{pmatrix}
           V_1 & 0   & 0   & 0 \\
             0 & U_1^* & 0   & 0 \\
             0 & 0   & U_2^* & 0 \\
             0 & 0   & 0   & V_2
          \end{pmatrix}
          Z,
 \end{equation*}
 and the boundary conditions are
 \begin{enumerate}
  \item separated, if and only if $S=0$.
  \item mixed, if and only $0<\rank(S)<n$.
  \item coupled, if and only if $\rank(S)=n$.
 \end{enumerate}
\end{theorem*}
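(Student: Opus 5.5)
The plan is to follow the proof of Lemma~\ref{lem:fifth} and of Theorems~\ref{thm:2n+1th-odd} and~\ref{thm:2n+1th-even}, with the difference that in the even case the middle ``unpaired'' eigenvector $\mathbf{v}_{\pm1}$ is absent. Consequently all blocks are $n\times n$ and the CS-decomposition is the plain $2\times 2$ block one.

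\emph{Reformulation.} Since $C_{2n}$ is real and skew-symmetric, $iC_{2n}$ is Hermitian and unitary, with eigenvalues $\pm1$ of multiplicity $n$ each. The self-adjointness condition \eqref{2n+1theq3} (even analogue) becomes
\begin{equation*}
(A:B)\bigl(iC_{2n}\oplus(-iC_{2n})\bigr)(A:B)^*=0 .
\end{equation*}
We check directly that the unitary $Z$ of the statement diagonalizes this Hermitian matrix, i.e.\ $Z\bigl(iC_{2n}\oplus(-iC_{2n})\bigr)Z^*$ is a signature matrix. The inner factor $I_n\oplus(-1)^{n+1}iC_n$ turns the antidiagonal structure into $\pm$ identities, and the outer Hadamard-type factor separates the $\pm1$ eigenspaces. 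After a permutation of coordinates, the $-1$ eigenspace consists of the first and third $n$-blocks and the $+1$ eigenspace of the second and fourth.

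\emph{Reduction to a unitary $W$.} Exactly as before, we split each row of $(A:B)$ into its components in the two eigenspaces, giving $X_1X_1^*=X_{-1}X_{-1}^*$. We compare singular value decompositions and use uniqueness of positive square roots, together with $\rank(A:B)=2n$ and invariance under left multiplication by nonsingular matrices. This gives
\begin{equation*}
(A:B)=G\,(P_{-1}+WP_{1})\,Z
\end{equation*}
with $G$ nonsingular, $W$ an arbitrary $2n\times2n$ unitary, and $P_{\pm1}$ the coordinate selections of the two eigenspaces.

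\emph{CS-decomposition.} Partition $W$ into $n\times n$ blocks and apply the CS-decomposition \cite[Theorem 2.7.1]{horn12}:
\begin{equation*}
W=(U_1\oplus U_2)\begin{pmatrix}C&S\\-S&C\end{pmatrix}(V_1\oplus V_2),
\end{equation*}
with $C,S\ge0$ diagonal and $C^2+S^2=I_n$. We absorb $U_1\oplus U_2$ into $U$ (together with $G$). Next we rewrite the identity blocks coming from $P_{-1}$ as $U_1^*$ and $U_2^*$ multiplied on the right, exactly as in the column-factor manipulation of the odd case. This yields the displayed product with the middle factor
\begin{equation*}
\begin{pmatrix}C&I_n&0&S\\-S&0&I_n&C\end{pmatrix}.
\end{equation*}

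\emph{Classification.} Reading off $A$ and $B$ from the factorization, we expect $\rank A=n+\rank(S)=\rank B$. The $A$-columns are the first and second block columns, giving $\rank\begin{pmatrix}C&I_n\\-S&0\end{pmatrix}=n+\rank S$, and symmetrically for $B$. Row-reducing the middle factor then shows the following.
\begin{itemize}
\item If $S=0$, each row involves only one endpoint, so the conditions are separated.
\item If $\rank S=n$, no nonzero combination of rows is supported at one endpoint only, so the conditions are coupled.
\item If $0<\rank S<n$, the conditions are mixed.
\end{itemize}

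\emph{Main obstacle.} I expect the only delicate point to be bookkeeping: verifying that the specific $Z$ (with the sign $(-1)^{n+1}$ and the factor $i$) really diagonalizes $iC_{2n}\oplus(-iC_{2n})$, and that the ordering of its eigenspaces places the identity blocks in columns two and three. I would check this first on $n=1,2$ and then by induction on the antidiagonal sign pattern of $C_n$.
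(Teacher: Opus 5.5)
Your route is the one the paper uses: it cites \cite[Theorem 5.1]{hardy24} and repeats that method in Lemma~\ref{lem:fifth} and Theorems~\ref{thm:2n+1th-odd}--\ref{thm:2n+1th-even}. You diagonalize the Hermitian form, reduce to a unitary $W$ in eigen-coordinates, apply the CS-decomposition, and read off ranks. However, one claim in your reformulation is false, and it is exactly the bookkeeping you flagged.

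Write $C_{2n}=\begin{pmatrix}0&C_n\\(-1)^nC_n&0\end{pmatrix}$ and use $C_n^{T}=(-1)^{n+1}C_n$ and $C_n^2=(-1)^{n+1}I_n$. The inner factor then gives $(I_n\oplus(-1)^{n+1}iC_n)\,iC_{2n}\,(I_n\oplus(-1)^{n+1}iC_n)^*=(-1)^{n+1}\begin{pmatrix}0&I_n\\I_n&0\end{pmatrix}$, and the Hadamard factor turns this into $(-1)^{n+1}(I_n\oplus(-I_n))$. The same $2n\times 2n$ block of $Z$ acts at both endpoints while the form changes sign, so
\[
Z\bigl(iC_{2n}\oplus(-iC_{2n})\bigr)Z^*=(-1)^{n+1}\bigl(I_n\oplus(-I_n)\oplus(-I_n)\oplus I_n\bigr).
\]
The two eigenspaces are therefore blocks $\{1,4\}$ and $\{2,3\}$, not $\{1,3\}$ and $\{2,4\}$. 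Which of them is the $-1$ eigenspace depends on the parity of $n$. So your normalization $P_{-1}+WP_1$ would put the identities in columns $1$ and $4$ when $n$ is even.

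Instead, normalize to $(A:B)=G\bigl(P_{\{2,3\}}+WP_{\{1,4\}}\bigr)Z$ whatever the sign. This is legitimate because $\rank(A:B)=2n$ forces both $2n\times 2n$ coefficient blocks to be invertible. With that correction, your CS step gives exactly the displayed middle factor. Since $Z$ is block diagonal with two identical $2n\times 2n$ blocks, block columns $1$--$2$ carry $A$ and $3$--$4$ carry $B$. Hence $\rank A=n+\rank S=\rank B$, and the classification follows as you describe. The direct block computation above also makes your planned induction on $n$ unnecessary.
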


\begin{theorem*}[Canonical forms for $2(2k)+1$-th order differential operators]
 \strut\\
 Let $2k=n\in\mathbb{N}$ be even and let $A$ and $B$ be $(2n+1)\times (2n+1)$ matrices satisfying 
 \begin{equation*}
  \rank(A:B)=2n+1 \quad \textrm{and}\quad AC_{2n+1}A^*=BC_{2n+1}B^*.
 \end{equation*}
 Then, there exist $(n+1)\times(n+1)$ unitary matrices $U_1$ and $V_1$,
 $n\times n$ unitary matrices $U_2$ and $V_2$
 and real diagonal $n\times n$ matrices $C$ and $S$
 such that
 \begin{enumerate}[label=(\alph*)]
  \item
   $C^2+S^2=I_n$,
  \item
   $(A:B)$ has the canonical form
   \begin{equation*}
   (A:B) = \dfrac{1}{\sqrt{2}} Q_1
           \begin{pmatrix}
            C & 0 & I_n & 0 & \mathbf{S} \\
            -\mathbf{S}^* & I_{n+1} & 0 & I_{n+1} & \mathbf{C}
           \end{pmatrix}
           Q_2,
   \end{equation*}
  \item $\rank(A) = 2n+1-\Null(I_n-KK^*) = \rank(B)$,
  \item the boundary conditions are
   \begin{enumerate}[label=(\roman*)]
    \item mixed, if and only if $\rank(I_n-KK^*)<n$,
    \item coupled, if and only if $\rank(I_n-KK^*)=n$,
   \end{enumerate}
 \end{enumerate}
 \medskip
 where
 $\mathbf{S}=\begin{pmatrix} 0 & S \end{pmatrix}$,
 $\mathbf{C}=1\oplus C=\begin{pmatrix}1&0\\0&C\end{pmatrix}$,
 $K=\begin{pmatrix}0&I_n\end{pmatrix}U_2\mathbf{C},$
 \begin{gather*}
  Q_1 = \begin{pmatrix}
         U_1 & 0 \\
           0 & U_2
        \end{pmatrix},\quad
  Q_2 = \begin{pmatrix}
         V_1 &     0 &     0 &     0 &   0 \\
           0 & U_2^* &     0 &     0 &   0 \\
           0 &     0 & U_1^* &     0 &   0 \\
           0 &     0 &     0 & U_2^* &   0  \\
           0 &     0 &     0 &     0 & V_2
        \end{pmatrix}
        Q_3, \\
  Q_3 = \begin{pmatrix}
         I_n & 0 & 0 & 0 & 0 \\
           0 & \begin{bmatrix}1\\0\\ \vdots \\0\end{bmatrix} & 0 & 0 & 0 \\
           0 & 0 & I_n & 0 & 0 \\
           0 & 0 & 0 & \begin{bmatrix}0\\ I_n \end{bmatrix} & 0  \\
          0 & 0 & 0 & 0 & I_{n+1}
        \end{pmatrix}Q_4, \\
  Q_4 = \begin{pmatrix}
         I_n &      0 & (-1)^{n+1}C_n^* & 0 & 0 & 0 \\
           0 & \sqrt2 &             0 & 0 & 0 & 0 \\
         I_n &      0 &   (-1)^{n}C_n^* & 0 & 0 & 0 \\
         0 & 0 & 0 & I_n &      0 &  (-1)^{n+1}C_n^* \\
         0 & 0 & 0 &   0 & \sqrt2 &              0 \\
         0 & 0 & 0 & I_n &      0 &    (-1)^{n}C_n^*
        \end{pmatrix}.
 \end{gather*}
\end{theorem*}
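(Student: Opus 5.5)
The statement in the Conclusion is a restatement of Theorem \ref{thm:2n+1th-even} with $n=2k$. So I would prove it by checking that its hypotheses and conclusions coincide with those of Theorem \ref{thm:2n+1th-even}, and then invoking the proof given there. For completeness, I would also recall how that proof runs, so it is clear that nothing depends on $n$ being anything other than even.

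The plan has the following steps.

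\textbf{Step 1: split by eigenspaces.} Rewrite the self-adjointness condition \eqref{2n+1theq3} as
\[
(A:B)\bigl(C_{2n+1}\oplus(-C_{2n+1})\bigr)(A:B)^*=0 .
\]
Split each row of $(A:B)$ into its components in the $\pm1$ eigenspaces. This gives $X_1X_1^*=X_{-1}X_{-1}^*$.

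\textbf{Step 2: normalize with the singular value decomposition.} Use the SVD and uniqueness of positive square roots, together with $\rank(A:B)=2n+1$, to reach the normalized form \eqref{eq:finalcanon}:
\[
(A:B)=(V_{11}^*+WV_{12}^*:V_{21}^*+WV_{22}^*),
\]
with $W$ unitary. This uses the explicit eigenvector matrix \eqref{eq:Vsimp2n+1-even}.

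\emph{Main obstacle.} The point needing care is that \eqref{eq:Vsimp2n+1-even} really diagonalizes $C_{2n+1}\oplus(-C_{2n+1})$ when $n$ is even. The middle entry of $C_{2n+1}$ is $(-1)^{n+1}=-1$, so $\mathbf v_{-1}$ is a $-1$ eigenvector of $C_{2n+1}$. One must also check that the blocks $(I_n,0,\pm C_n)^T/\sqrt2$ have the stated eigenvalues, using $C_n^2=(-1)^{n+1}I_n$ and the anti-diagonal sign pattern. This parity-dependent placement of $\mathbf v_{-1}$ is exactly what distinguishes the even case.

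\textbf{Step 3: factor $A$ and $B$ and read off ranks.} Factor $A$ and $B$ as in \eqref{eq:factor1:2n+1-even}. This yields
\[
\rank A=n+1+\rank W_3,\qquad \rank B=n+\rank\begin{pmatrix}\mathbf w_1&W_2\\ w_3&\mathbf w_4^T\end{pmatrix}.
\]

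\textbf{Step 4: apply the CS-decomposition.} Apply the CS-decomposition (\cite[Theorem 4.1, Corollary 3.1]{fuhr18}) to $W$, partitioned as $(n,n+1)$. This gives unitary $U_1,V_1$ of order $n$ and $U_2,V_2$ of order $n+1$, and diagonal $C,S\ge0$ with $C^2+S^2=I_n$. Substituting and absorbing the unitaries into $Q_1$ and $Q_2$ gives part (b).

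Note that the summary statement lists $U_1$ as $(n+1)\times(n+1)$. I would follow the block sizes actually forced by the proof, namely $U_1$ of order $n$, and treat the summary's size labels as the same objects.

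\textbf{Step 5: ranks in terms of $K$.} Compute
\[
\rank A=n+1+\rank(I_n-KK^*),\qquad \rank B=n+\rank(I_{n+1}-K^*K),
\]
via $\rank X=\rank XX^*$ and $\mathbf S^*\mathbf S=I-\mathbf C^2$. Since $KK^*$ and $K^*K$ share their nonzero spectrum, $\Null(I_n-KK^*)=\Null(I_{n+1}-K^*K)$, which gives part (c).

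\textbf{Step 6: classification.} Part (c) implies $n+1\le\rank A=\rank B\le 2n+1$. Write $r=\rank(I_n-KK^*)$. Following Definition \ref{typesdef1}, the conditions are coupled exactly when $A$ and $B$ are both invertible, i.e. $r=n$, and mixed otherwise, i.e. $r<n$. This gives part (d).
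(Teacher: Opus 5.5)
Your proposal is correct and follows the paper's own route: the Conclusion statement is Theorem~\ref{thm:2n+1th-even} with $n=2k$. Its proof runs through the eigenspace splitting, the SVD normalization to $(V_{11}^*+WV_{12}^*:V_{21}^*+WV_{22}^*)$ using \eqref{eq:Vsimp2n+1-even}, the factorization \eqref{eq:factor1:2n+1-even}, the $(n,n+1)$ CS-decomposition of $W$, and the equal unit-eigenvalue multiplicities of $KK^*$ and $K^*K$. You are also right that the stated sizes of $U_1,V_1$ and $U_2,V_2$ are swapped in the statement and in Theorem~\ref{thm:2n+1th-even}: $Q_1$, $K=\begin{pmatrix}0&I_n\end{pmatrix}U_2\mathbf{C}$ and the paper's own proof all force $U_1,V_1$ to be $n\times n$ and $U_2,V_2$ to be $(n+1)\times(n+1)$.
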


\begin{theorem*}[Canonical forms for $2(2k+1)+1$-th order differential operators]
 \strut\\
 Let $2k+1=n\in\mathbb{N}$ be odd and let $A$ and $B$ be $(2n+1)\times (2n+1)$ matrices satisfying 
 \begin{equation*}
  \rank(A:B)=2n+1 \quad \textrm{and}\quad AC_{2n+1}A^*=BC_{2n+1}B^*.
 \end{equation*}
 Then, there exist $n\times n$ unitary matrices $U_1$ and $V_1$,
 $(n+1)\times (n+1)$ unitary matrices $U_2$ and $V_2$
 and real diagonal $n\times n$ matrices $C$ and $S$
 such that
 \begin{enumerate}[label=(\alph*)]
  \item
   $C^2+S^2=I_n$,
  \item
   $(A:B)$ has the canonical form
   \begin{equation*}
   (A:B) = \dfrac{1}{\sqrt{2}} Q_1
           \begin{pmatrix}
            \mathbf{C} & I_{n+1} & 0 & I_{n+1} & \mathbf{S} \\
            -\mathbf{S}^* & 0 & I_n & 0 & C
           \end{pmatrix}
           Q_2,
   \end{equation*}
  \item $\rank(A) = 2n+1-\Null(I_n-KK^*) = \rank(B)$,
  \item the boundary conditions are
   \begin{enumerate}[label=(\roman*)]
    \item mixed, if and only if $\rank(I_n-KK^*)<n$,
    \item coupled, if and only if $\rank(I_n-KK^*)=n$,
   \end{enumerate}
 \end{enumerate}
 \medskip
 where
 $\mathbf{S}=\begin{pmatrix} S & 0 \end{pmatrix}^*$,
 $\mathbf{C}=C\oplus 1=\begin{pmatrix}C&0\\0&1\end{pmatrix}$,
 $K=\begin{pmatrix}I_n&0\end{pmatrix}U_1\mathbf{C},$
 \begin{gather*}
  Q_1 = \begin{pmatrix}
         U_1 & 0 \\
           0 & U_2
        \end{pmatrix},\quad
  Q_2 = \begin{pmatrix}
         V_1 &     0 &     0 &     0 &   0 \\
           0 & U_1^* &     0 &     0 &   0 \\
           0 &     0 & U_2^* &     0 &   0 \\
           0 &     0 &     0 & U_1^* &   0  \\
           0 &     0 &     0 &     0 & V_2
        \end{pmatrix}
        Q_3, \\
  Q_3 = \begin{pmatrix}
         I_{n+1} & 0 & 0 & 0 & 0 \\
           0 & \begin{bmatrix} I_n \\ 0 \end{bmatrix} & 0 & 0 & 0 \\
           0 & 0 & I_n & 0 & 0 \\
           0 & 0 & 0 & \begin{bmatrix}0\\\vdots \\0\\1\end{bmatrix} & 0  \\
           0 & 0 & 0 & 0 & I_n
        \end{pmatrix}Q_4, \\
  Q_4 = \begin{pmatrix}
         I_n &      0 & (-1)^{n+1}C_n^* & 0 & 0 & 0 \\
           0 & \sqrt2 &             0 & 0 & 0 & 0 \\
         I_n &      0 &   (-1)^{n}C_n^* & 0 & 0 & 0 \\
         0 & 0 & 0 & I_n &      0 &  (-1)^{n+1}C_n^* \\
         0 & 0 & 0 &   0 & \sqrt2 &              0 \\
         0 & 0 & 0 & I_n &      0 &    (-1)^{n}C_n^*
        \end{pmatrix}.
 \end{gather*}
\end{theorem*}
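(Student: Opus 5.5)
The statement is the concluding summary theorem for the case $n=2k+1$ odd. It restates Theorem \ref{thm:2n+1th-odd} verbatim, with $n$ written as $2k+1$. So the plan is to reduce it to that theorem and to check that every ingredient is identical.

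\emph{Step 1: matching.} Setting $n=2k+1$ gives an odd natural number, and every odd $n\in\mathbb{N}$ arises this way with $k\geq 0$. So the hypotheses $\rank(A:B)=2n+1$ and $AC_{2n+1}A^*=BC_{2n+1}B^*$ are exactly those of Theorem \ref{thm:2n+1th-odd}. I will then compare each piece of the conclusion: the sizes of $U_1,V_1$ ($n\times n$) and $U_2,V_2$ ($(n+1)\times(n+1)$), the definitions of $\mathbf{S}$, $\mathbf{C}=C\oplus 1$ and $K=\begin{pmatrix}I_n&0\end{pmatrix}U_1\mathbf{C}$, and the block matrices $Q_1,\dots,Q_4$. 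All of these coincide symbol for symbol.

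\emph{Step 2: recall the proof structure of Theorem \ref{thm:2n+1th-odd}.}
\begin{itemize}
\item Rewrite the self-adjointness condition as $(A:B)(C_{2n+1}\oplus(-C_{2n+1}))(A:B)^*=0$.
\item Diagonalize $C_{2n+1}\oplus(-C_{2n+1})$ by the explicit unitary $V$ of \eqref{eq:Vsimp2n+1-odd}. The middle coordinate contributes the eigenvalue $+1$ in the first copy, which is why an $(n+1)$-block appears on the $U_1$ side.
\item Reduce via singular value decompositions, together with left multiplication by nonsingular matrices, to the form \eqref{eq:finalcanon-odd} with $W$ unitary.
\item Apply the CS-decomposition of $W$ with blocks of sizes $(n+1,n)$ to obtain the displayed factorization.
\end{itemize}
The rank formulas come from \eqref{eq:rankA:2n+1-odd} and \eqref{eq:rankB:2n+1-odd}, combined with $\rank X=\rank XX^*$ and the fact that $KK^*$ and $K^*K$ share their nonzero spectra. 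Together these give $\rank A=\rank B=2n+1-\Null(I_n-KK^*)$.

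\emph{Step 3: part (d).} By Theorem \ref{typesthm1} and its generalization, $\rank A=\rank B$ lies between $n+1$ and $2n+1$. The conditions are coupled exactly when $\rank A=2n+1$, i.e.\ when $\Null(I_n-KK^*)=0$, i.e.\ when $\rank(I_n-KK^*)=n$. Otherwise they are mixed. This matches Definition \ref{typesdef1}, extended to order $2n+1$ with $r$ ranging from $0$ to $n$.

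The only real point requiring care is the sign and dimension bookkeeping. Specifically, the factor $(-1)^{n+1}C_n^*$ in $Q_4$ must match $V_{\pm1}$ when $n$ is odd, and the placement of the extra unit in $\mathbf{C}=C\oplus1$ must be consistent with $\mathbf{v}_1$ sitting at coordinate $n+1$. These are exactly what Theorem \ref{thm:2n+1th-odd} already verifies, so the statement follows directly from it.
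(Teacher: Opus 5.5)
Your proposal is correct and follows the paper's route: the statement is Theorem \ref{thm:2n+1th-odd} with $n=2k+1$, and its proof is the one you outline (explicit diagonalizing $V$, reduction to $(V_{11}^*+WV_{12}^*:V_{21}^*+WV_{22}^*)$ with $W$ unitary, the $(n+1,n)$ CS-decomposition of $W$, and the $KK^*$ versus $K^*K$ rank count). One bookkeeping slip in your Step~1 is inherited from the statement: $\mathbf{C}=C\oplus 1$ is $(n+1)\times(n+1)$ and $K=\begin{pmatrix}I_n&0\end{pmatrix}U_1\mathbf{C}$, so the proof actually produces $(n+1)\times(n+1)$ unitaries $U_1,V_1$ and $n\times n$ unitaries $U_2,V_2$, as your own Step~2 remark about the $(n+1)$-block on the $U_1$ side indicates; the sizes you list should therefore be swapped, not certified as matching.
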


\end{document}